\documentclass{amsart}
\usepackage{amssymb,amscd,verbatim,latexsym}
\usepackage{enumerate}
\usepackage[mathscr]{eucal}

\textheight22cm
\textwidth16cm
\addtolength{\topmargin}{-10pt}
\addtolength{\oddsidemargin}{-1.5cm}
\evensidemargin-0.5cm
\oddsidemargin-0.5cm

\date\today
%
\usepackage{color}
\newcommand\ede{ \, := \, }



\newcommand\rp{'}

\newcommand\drp{''}

\setlength{\marginparwidth}{1.12in}

\setcounter{tocdepth}{1}
%
%

\newcommand{\supp}{\operatorname{supp}}

\newcommand\pullback{\sp{\downarrow\downarrow}}
\newcommand{\tto}{\rightrightarrows}
\newcommand\mathbfPsi{\mathbf \Psi}

\newcommand{\Prim}{\operatorname{Prim}}

\newcommand{\CC}{\mathbb C}

\newcommand{\RR}{\mathbb R}

\newcommand{\ZZ}{\mathbb Z}

\newcommand\pa{{\partial}}
\newcommand\Cstar{C\sp{\ast}}

\newcommand\Cs[1]{C\sp{\ast}(#1)}
\newcommand\rCs[1]{C_r\sp{\ast}(#1)}

\newcommand\ssub{stratified submersion}

\newcommand{\maC}{\mathcal C}
\newcommand{\cC}{\mathcal C}

\newcommand{\maF}{\mathcal F}
\newcommand{\maG}{\mathcal G}
\newcommand{\cG}{\mathcal G}
\newcommand{\maH}{\mathcal H}
\newcommand{\cH}{\mathcal H}

\newcommand{\cJ}{\mathcal J}
\newcommand{\maK}{\mathcal K}
\newcommand{\maL}{\mathcal L}

\newcommand{\maP}{\mathcal P}
\newcommand{\maR}{\mathcal R}

\newcommand{\cV}{\mathcal V}
\newcommand{\maW}{\mathcal W}



\newcommand{\de}{{\rm d}}
\newcommand{\maJ}{\mathcal J}
\def\pa{\partial}
%
%
\newtheorem{theorem}{Theorem}[section]
\newtheorem{proposition}[theorem]{Proposition}
\newtheorem{corollary}[theorem]{Corollary}

\theoremstyle{definition}
\newtheorem{definition}[theorem]{Definition}
\theoremstyle{remark}
\newtheorem{remark}[theorem]{Remark}
\newtheorem{example}[theorem]{Example}

\author[C. Carvalho]{Catarina Carvalho} \address{Dep. Matem\'{a}tica,
    Instituto Superior T\'{e}cnico, University of Lisbon, Av. Rovisco
    Pais, 1049-001 Lisbon, Portugal }
\email{catarina.carvalho@math.tecnico.ulisboa.pt}

\author[Y. Qiao]{Yu Qiao} \address{School of Mathematics and
  Information Science,\\ Shaanxi Normal University, Xi'an, 710119,
  China} \email{yqiao@snnu.edu.cn}

\thanks{Carvalho was partially supported by Funda\c c\~ao para a Ci\^{e}ncia e a Tecnologia, Portugal,
UID/MAT/04721/2013. Qiao
  was partially supported by NSF of China 11301317 and the Foundational
  Research Funds for the Central Universities GK201803003.\\
AMS Subject classification (2010): 
58J40 (primary), 58H05, 31B10, 47L80, 47L90.\\
Key-words:  Fredholm operator. Fredholm groupoid. Lie groupoid
$C^*$-algebra. Pseudodifferential operator, Layer potentials method, Conical domain, Desingularization, Weighted Sobolev space.}


\date\today

\title[Fredholm Groupoids and Layer Potentials ]{Fredholm Groupoids and Layer Potentials on Conical Domains}

\begin{document}

\begin{abstract}
We show that layer potential groupoids for conical domains constructed in
 an earlier paper (Carvalho-Qiao, Central European J. Math., 2013) are
 Fredholm groupoids, which enables us to deal with many analysis problems
 on singular spaces in a unified treatment. As an application,
 we obtain Fredholm criteria for operators on layer potential groupoids.
\end{abstract}

\maketitle

\tableofcontents

\section{Introduction}

Lie groupoids are effective tools to model analysis problems on singular spaces, for a small sample of applications see,  for instance, \cite{MonthubertSchrohe, AJ2,  ALN, DebordLescure1, DebordLescure2, DLN,  LN, Monthubert01, MonthubertNistor, MRen, Nistor16} and references therein. One general advantage behind this strategy is that, by associating a Lie groupoid to a given singular problem, not only we are able to apply groupoid techniques, but also get automatically a groupoid $C^*$-algebra and well-behaved pseudodifferential calculi naturally affiliated to
this $C^*$-algebra \cite{ASkandalis2, LMN, LN, Monthubert03, NWX,vanErpYuncken}.

In what regards Fredholm criteria, in the singular case we often obtain Fredholm conditions of the form ``$P$ is Fredholm if, and only if, $P$ is elliptic \emph{and} a family of limit operators $P_{\alpha}$ is invertible". In many situations, this family of operators can be obtained from suitable representations of the groupoid $C^{*}$-algebra, and so we can use representation theory to study Fredholmness.

Recently \cite{CNQ, CNQ17}, with Victor Nistor, the notion of \emph{Fredholm groupoid} was considered as, in some sense, the largest class of Lie groupoids for which such Fredholm criteria hold with respect to a natural class of representations, the {regular representations} (see Section \ref{s.fredholm} for the precise definitions). A characterization of such groupoids is given relying on the notions of \emph{strictly spectral} and \emph{exhaustive} families of representations, as in \cite{nistorPrudhon, Roch}.
The associated non-compact manifolds are named \emph{manifolds with amenable ends}, since certain isotropy groups at infinity are assumed to be
amenable.
This is the case for manifolds with cylindrical and poly-cylindrical ends, for
manifolds that are asymptotically Euclidean, and for manifolds that
are asymptotically hyperbolic, and also manifolds obtained by iteratively blowing-up singularities. In \cite{CNQ} we discuss these examples extensively, and show how the Fredholm groupoid approach provides an unified treatment for many singular problems.

In the present paper, our purpose is to relate the Fredholm groupoid approach to the study of \emph{ layer potential operators} on domains with \emph{conical singularities}. Our motivation comes from the study of boundary problems for elliptic equations,
namely by applications of the classical {method of layer potentials}, which reduces differential equations to  \emph{boundary} integral equations. One typically wants to invert an
operator of the form ''$\frac{1}{2} +K$" on suitable
function spaces on the boundary of some domain $\Omega$. If the boundary is $\maC^2$, or even $\cC^1$, then the
integral operator $K$ is compact \cite{FJR, Fol, Kress} on
$L^2(\pa\Omega)$, so the operator $\frac{1}{2}+K$ is Fredholm and
we can apply the classical Fredholm theory to solve the Dirichlet problem.  But if
there are singularities on the boundary, as in the case of conical domains,  this result is not necessarily true \cite{Els, FJL, Kon, Kress,
Lew, LP, IMitrea2, IMitrea3, MitreaNistor}.
 Suitable groupoid $C^{*}$-algebras, and their representation theory,  are then a means to provide the right replacement for the compact operators, and  the theory of Fredholm groupoids is suited to yield the desired Fredholm criteria.

We consider here bounded domains with conical points $\Omega$ in
$\mathbb{R}^n$, $n \ge 2$, that is, $\overline{\Omega}$ is locally diffeomorphic
to a cone with smooth, \emph{possibly disconnected}, base. (If $n=2$, we allow $\Omega$ to be a domain with cracks. See Section  \ref{s.LP_groupoids} for the precise definitions.)
In \cite{CQ13},  the authors associated
   to $\Omega$, or
more precisely to $\pa \Omega$, a \emph{layer potentials groupoid}  over the (desingularized) boundary that aimed to provide the right setting to study invertibility and Fredholm problems as above.
As a space, we have
\begin{equation*}\label{grpd.nocrack}
  \cG:= \bigsqcup\limits_{i} (\pa\omega_i \times \pa\omega_i) \times (\RR^+ )  \quad \bigsqcup \quad (\Omega_0 \times \Omega_0) \quad  \tto \quad M:=\left(\bigsqcup\limits_{i } \partial\omega_{i} \times [0,1) \right)\quad
    \bigsqcup \quad \Omega_0
\end{equation*}
where $\Omega_0$ is the smooth part of $\pa \Omega$, and the local cones have bases  $\omega_{i} \subset
  S^{n-1}$,
  with smooth boundary, , $i=1,...,l$. The space of units $M$ can be thought of as a desingularized boundary. The limit operators in this case, that is, the operators over $M\setminus \Omega_{0}$,  have dilation invariant kernels  on $(\pa\omega_i\times \pa\omega_i) \times (\RR^+ )$, that eventually yield a family of Mellin convolution operators on $( \pa\omega_i) \times (\RR^+ )$, indexed on each local cone. This fact was one of the original motivations in our definition.
In \cite{CQ13}, we were able to obtain Fredholm criteria making use of the machinery of pseudodifferential operators on Lie manifolds \cite{ALN}.

In this paper, we go further to show that the layer potentials groupoid associated to (the boundary of) a conical domain is indeed a Fredholm groupoid (Theorem \ref{thm.LPFred}).
We can then place the layer potentials approach in the framework of Fredholm groupoids. Moreover, we obtain the Fredholm criteria naturally
and extend to  a space of operators  that contains $L^{2}$-inverses.
 These Fredholm criteria are formulated on weighted Sobolev spaces,
 we refer the reader to \cite{Kon,MR} and references therein.
 We recall their definition: let $r_{\Omega}$ be the  smoothed  distance
function to the set of conical points of $\Omega$. We define the $m$-th
Sobolev space on $\pa\Omega$ with weight $ r_{\Omega}$ and index $a$ by
\begin{equation*}\label{eq.def.ws}
  \maK_{a}^m(\pa\Omega)=\{u\in L^2_{\text{loc}}(\pa\Omega), \, \,
  r_{\Omega}^{|\alpha|-a}\partial^\alpha u\in L^2(\pa\Omega), \,\,\,\text{for
    all}\,\,\, |\alpha|\leq m\}.
\end{equation*}
We have the following isomorphism \cite{BMNZ}:
$$ \maK^{m}_{\frac{n-1}{2}}(\partial\Omega)\simeq
  H^{m}(\partial'\Sigma(\Omega),g), \quad  \mbox{  for all  } m\in \mathbb{R}.$$
where $\Sigma(\Omega)$ is a desingularization, and $\partial'\Sigma(\Omega)$
is the union of the hyperfaces that are not at infinity in $\pa \Sigma(\Omega)$, which can be identified with a desingularization of $\pa \Omega$ (see Section \ref{s.LP_groupoids}).

Applying the results for Fredholm groupoids we obtain our main result (Theorems \ref{thm.fredholm} and \ref{thm.crack}). The space $L^{m}(\maG)$ is the completion of $\Psi^m(\maG)$ with respect to the operator norm on Sobolev spaces (see Section \ref{ss.ops.grpds}).

\begin{theorem}
Let $\Omega \subset \mathbb{R}^n$ be a conical domain without cracks and
$\Omega^{(0)}=\{p_1,p_2,\cdots, p_l\}$ the set of conical points, with possibly disconnected cone base $\omega_{i}\subset S^{n-1}$. Let $\maG\tto M=\pa'\Sigma(\Omega)$
be the layer potential groupoid as in Definition \ref{gpd1}. Let $P\in L^{m}(\maG)\supset \Psi^m(\maG)$ and $s \in \RR $.
We have
\begin{equation*}
  \begin{gathered}
	P : \maK_{\frac{n-1}{2}}^s(\pa\Omega) \to \maK_{\frac{n-1}{2}}^{s-m}(\pa\Omega) \mbox{ is Fredholm}
        \ \ \Leftrightarrow \ \ P \mbox{ is elliptic and all the Mellin convolution operators  }\\
	\ P_{i}:=\pi_{p_{i}}(P) : H^s(\RR^+\times \pa\omega_i;g) \to H^{s-m}(\RR^+\times \pa\omega_i;g)\,,
	        \, \mbox{ are invertible}\,,
  \end{gathered}
\end{equation*}
where the metric $g= r^{-2}_\Omega \, g_e$ with $g_e$ the Euclidean metric.
\end{theorem}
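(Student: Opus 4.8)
The plan is to deduce the stated criterion directly from the fact that $\maG$ is a Fredholm groupoid, which is the content of Theorem \ref{thm.LPFred}, combined with the general characterization of Fredholmness on Fredholm groupoids from \cite{CNQ, CNQ17}. Recall that for a Fredholm groupoid with a single open dense orbit carrying trivial isotropy, an operator in the norm-closed calculus is Fredholm in the vector representation if and only if it is elliptic and its image under every regular representation associated to the complement of that orbit is invertible. The proof then splits into three tasks: (i) identifying the vector representation with the action on the weighted Sobolev spaces $\maK^{s}_{\frac{n-1}{2}}(\pa\Omega)$; (ii) identifying the boundary orbits and their regular representations with the Mellin convolution operators $P_i$; and (iii) reducing the general order $m$ and index $s$ to the model order-zero, $L^2$ case.

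For (i), I would use that the open dense orbit of $\maG$ is the smooth part $\Omega_0$ of $\pa\Omega$, over which $\maG$ restricts to the pair groupoid $\Omega_0\times\Omega_0$; hence its $C^*$-algebra acts by the vector representation on $L^2(\Omega_0)$ and contributes only compact operators. Passing to the Sobolev scale $H^s$ adapted to $\maG$ and invoking the isomorphism $\maK^{s}_{\frac{n-1}{2}}(\pa\Omega)\simeq H^{s}(\pa'\Sigma(\Omega),g)$ of \cite{BMNZ}, with $g=r_\Omega^{-2}g_e$, one realizes $P:\maK^{s}_{\frac{n-1}{2}}(\pa\Omega)\to \maK^{s-m}_{\frac{n-1}{2}}(\pa\Omega)$ as the action of $P\in L^m(\maG)$ on the groupoid Sobolev spaces. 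This is exactly where the weight $\frac{n-1}{2}$ and the conformal metric $g$ must be used, so that the identification is unitary at the $L^2$ level.

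For (ii), the singular boundary $M\setminus\Omega_0$ decomposes into exactly $l$ orbits, one per conical point $p_i$: the pair factor $\pa\omega_i\times\pa\omega_i$ makes $\pa\omega_i\times\{0\}$ a single orbit, and the isotropy is the amenable dilation group $\RR^+$ (the source of amenability underlying Theorem \ref{thm.LPFred}). The reduction $\maG|_{\pa\omega_i\times\{0\}}$ is thus the product of the pair groupoid with $\RR^+$, and its regular representation $\pi_{p_i}$ acts on $L^2(\pa\omega_i\times\RR^+)$, extended to $H^s(\RR^+\times\pa\omega_i;g)$. Since the limit kernels are dilation invariant, I would apply the logarithmic change of variable that converts dilation invariance into translation invariance and then the group Fourier (Mellin) transform on $\RR^+$, which diagonalizes the $\RR^+$-action and exhibits $\pi_{p_i}(P)$ as the announced family of Mellin convolution operators; because representations at points of one orbit are unitarily equivalent, a single $\pi_{p_i}$ per conical point suffices.

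Finally, for (iii), the criterion of \cite{CNQ, CNQ17} is naturally phrased for order-zero operators on $L^2$, so to reach $P\in L^m(\maG)$ between indices $s$ and $s-m$ I would conjugate by invertible elliptic order-reduction operators $R^{t}\in\Psi^{t}(\maG)$, which changes neither ellipticity nor invertibility of the boundary representations; the passage from $\Psi^m(\maG)$ to its norm completion $L^m(\maG)$ then follows by continuity, since both the principal symbol and the $\pi_{p_i}$ extend continuously and Fredholmness is preserved under the relevant norm limits. The hard part will be step (ii): making rigorous that the groupoid structure at a cone point produces precisely the dilation-invariant kernels and that the Mellin transform genuinely intertwines $\pi_{p_i}$ with convolution on the weighted space $H^s(\RR^+\times\pa\omega_i;g)$, which is exactly where the specific geometry of the layer potential groupoid of Definition \ref{gpd1} must be exploited.
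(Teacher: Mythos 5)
Your proposal is correct and follows essentially the same route as the paper: the paper combines Theorem \ref{thm.LPFred} (the groupoid is Fredholm) with the general criterion of Theorem \ref{thm.nonclassical2} (\cite[Theorem~4.17]{CNQ}), uses the isomorphism $\maK^{s}_{\frac{n-1}{2}}(\pa\Omega)\simeq H^{s}(\pa'\Sigma(\Omega),g)$ of Proposition \ref{Identification} for your step (i), and the identification of the boundary regular representations with Mellin convolution operators (Proposition \ref{Groupoid} and the results of \cite{QL18,QN12} for elements of the completion) for your step (ii), with one representation per conical point by unitary equivalence along orbits. Your step (iii), conjugation by order-reduction operators $(1+\Delta)^{\pm s/2}$, is not a separate task in the paper but is precisely how Theorem \ref{thm.nonclassical2} is proved in \cite{CNQ}, so you have merely unpacked the cited result rather than deviated from it.
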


The above theorem also holds, with modifications, for polygonal domains with ramified cracks (Theorem \ref{thm.crack}).

The layer potentials groupoid constructed here is related to the so-called  $b$-groupoid (Example \ref{bgrpd}) associated to the manifold
with smooth boundary $\pa' \Sigma(\Omega)$, which induces  Melrose's $b$-calculus
\cite{MelroseAPS}. If the boundaries of the local cones bases are connected, then the two groupoids coincide (note that it is often the case that the boundaries are disconnected, for instance take $n=2$). In general, our pseudodifferential calculus
contains the compactly supported $b$-pseudodifferential operators, in that our
groupoid contains the $b$-groupoid as an open
subgroupoid. The main difference at the groupoid level is that in the usual $b$-calculus there is no interaction between the different faces at each conical point.  

In \cite{QL18}, Li and the second-named author applied the techniques of pseudodifferential operators on Lie groupoids to the method of layer potentials on plane polygons (without cracks) to obtain the invertibility of operators $I \pm K$ on suitable weighted Sobolev spaces on the boundary, where $K$ is the double layer potential operators (also called Neumann-Poincar\'{e} operators) associated to the Laplacian and the polygon. The Lie groupoids used in that paper are exactly the groupoids we constructed in \cite{CQ13}, which will be shown to be Fredholm in this paper. Moreover, the second-named author used a similar idea to make a connection between the double layer potential operators on three-dimensional wedges and (action) Lie groupoids in \cite{Qiao18}.

We expect to  be able to use our results to show that the relevant integral operators appearing in the method of layer potentials  for domains with conical points of dimension greater than or equal to $3$ are Fredholm between suitable weighted Sobolev spaces. However, for domains with cracks, the resulting layer potential operators are no longer Fredholm. These issues will be addressed in a forthcoming paper.

Let us briefly review the contents of each section. We start with reviewing the general notions relating to Lie groupoids, groupoid $C^{*}$-algebras and pseudodifferential operators on Lie groupoids (Sections \ref{ss.Liegroupoids} and \ref{ss.ops.grpds}). Then in Section \ref{ss.fredholm}, we review the definition  of Fredholm groupoids and their characterization, relying on strictly spectral and exhaustive families of representations, resulting on Fredholm criteria for operators on Fredholm groupoids.
In Section \ref{s.LP_groupoids}, we describe the construction of layer potential groupoids on conical domains and give their main properties, in the case with no cracks (Section \ref{ss.groupoidnocrack}) and in the case of polygonal domains with ramified cracks (Section \ref{ss.groupoidcracks}). Finally, in Section \ref{s.FredCond}, we show that such groupoids are Fredholm and obtain the Fredholm criteria for layer potential groupoids.

\vspace{0.2cm}
\emph{Acknowledgements:} We would like to thank the editors for the invitation and Victor Nistor for useful discussions and suggestions.

\vspace{0.3cm}
\section{Fredholm Groupoids}\label{s.fredholm}

We recall some basic definitions and properties of Lie groupoids and Fredholm Lie groupoids,
and refer to Renault's book \cite{renault80} for locally compact groupoids,
Mackenzie's books \cite{Mackenzie87, Mackenzie05} for Lie groupoids,
and the papers \cite{CNQ, CNQ17} for Fredholm Lie groupoids.

\vspace{0.2cm}

\subsection{ Lie groupoids and groupoid $C^{*}$-algebras}\label{ss.Liegroupoids}

We recall that a {\em small category} is a category all of whose objects
form a set. Here is a quick definition of groupoids.
\begin{definition}
A {\em groupoid} is a small category in which every morphism is invertible.
\end{definition}

More precisely, a groupoid $\maG$ consists of two sets $\maG^{(1)}, \maG^{(0)}$ together
with structural morphisms
\begin{enumerate}
\item the domain map and range map $d, r :\maG^{(1)} \rightarrow \maG^{(0)}$,
\item the product $\mu: \maG^{(2)}:=\{(g,h) \in \maG^{(1)} \times \maG^{(1)} \,| \,d(g)=r(h) \} \rightarrow \maG^{(1)}$, written $gh:=\mu(gh)$ for simplicity,
     where $\maG^{(2)}$ is called the set of composable pairs,
\item the inverse map $\iota: \maG^{(1)} \rightarrow \maG^{(1)}$, written $g^{-1}:=\iota(g)$, and
\item the inclusion or unit map $u: \maG^{(1)} \rightarrow \maG^{(0)}$,
\end{enumerate}
satisfying the following relations
\begin{enumerate}
\item $d(gh)= d(h)$, $r(gh)=r(g)$ if $(g, h) \in \maG^{(2)}$,
\item $\mu$ is associative: $(gh)k=g(hk)$ for all $(g,h), (h,k) \in \maG^{(2)}$,
\item $u$ is injective and $d(u(x))=x=r(u(x))$ for all $x\in \maG^{(0)}$,
\item $g\,u(d(g))=g$, and $u(r(g))\,g=g$ for all $g \in \maG^{(1)}$, and
\item $r(g^{-1}) = d(g)$, $d(g^{-1})=r(g)$, $g \, g^{-1}= u(r(g))$ and $g^{-1}\, g= u(d(g))$ for all $g\in \maG^{(1)}$.
\end{enumerate}

We always identify $\maG$ with $\maG^{(1)}$, denote $M:=\maG^{(0)}$, and
usually write $\maG \tto M$ for a groupoid $\maG$ with units $M$.
Let $A, B\subset M$. We denote by $\maG_A:=d^{-1}(A)$, $\maG^B:=r^{-1}(B)$,
and $\maG_A^B:=\maG_A \cap \maG^B$. We call $\maG_A^A$ the {\em reduction}
of $\maG$ to $A$. If $\maG_A^A=\maG_A=\maG^A$, then $A$ is called {\em invariant}
and $\maG_A$ is also a groupoid, called the {\em restriction} of $\maG$ to $A$.
In particular, if $x\in M$, then $\maG_x^x=d^{-1}(x) \cup r^{-1}(x)$ is called
the {\em isotropy group} at $x$.

We would like to impose a certain topology on $\maG$.
In general, a groupoid $\maG \tto M$ is said to be {\em locally compact} if $\maG$ and $M$ are locally compact spaces with $M$ Hausdorff,
all five structure maps $d, r, \mu, \iota, u$ are continuous,
and the map $d$ is surjective and open.

Note that in the general  definition only the unit space $M$ is required to be Hausdorff, and $\maG$ may be non-Hausdorff.
However, throughout the paper, all our spaces  will be Hausdorff.

\vspace{0.2cm}

In the analysis of problems on singular spaces,
it is crucial to distinguish between smooth manifolds without corners and manifolds with boundary or corners.
By a {\em smooth manifold} we shall always mean a smooth manifold {\em without corners}.
By definition, every point $p \in M$ of a {\em manifold with corners} has a coordinate neighborhood
diffeomorphic to $[0,1)^k \times (-1,1)^{n-k}$ such that the transition functions are smooth.
The number $k$ is called the {\em depth} of the point $p$.
The set of {\em inward pointing tangent vectors} $v\in T_p(M)$ defines
a closed cone denoted by $T^+_p(M)$. A smooth map $f; M_1 \rightarrow M_2$ between
two manifolds with corners is called a {\em tame submersion} provided that
$df(v)$ is an inward pointing vector of $M_2$ if and only if $v$ is an inward pointing vector of $M_1$.
Then we introduce the notion of Lie groupoids.

\begin{definition}
A {\em Lie groupoid} is a locally compact groupoid $\maG \tto M$ such that
\begin{enumerate}
\item $\maG$ and $M$ are both manifolds with corners,
\item all five structure morphisms $d, r, \mu, u$ and $\iota$ are smooth,
\item $d$ is a tame submersion of manifolds with corners.
\end{enumerate}
\end{definition}

We remark that $(3)$ implies that each fiber $\maG_x=d^{-1}(x) \subset \maG$ is a smooth manifold (without corners) \cite{CNQ, Nistor_Comm}. Moreover, $\maG$ is Hausdorff (and second countable).

We assume all our locally compact groupoids to be endowed with a fixed (right) Haar system, denoted $(\lambda_x)$, where $x$ ranges
 through the set of units. All Lie groupoids have well-defined (right) Haar systems.

\vspace{0.1cm}
 To any locally compact groupoid $\maG$ (endowed with a Haar system),
there are associated two basic $C^*$-algebras, the {\em full} and {\em
  reduced} $C^*$-algebras $\Cs{\maG}$ and $\rCs{\maG}$, whose
definitionss we  recall now.
 Let $\maC_c(\maG)$ be the space of continuous, complex valued, compactly supported functions on $\maG$, as a $*$-associative algebra, endowed with convolution on fibres and the usual involution.
 There exists a
natural algebra norm on $\maC_c(\maG)$ defined by
\begin{equation*}
  \| \varphi\|_1 \ede \max \, \Bigl\{ \, \sup_{x\in M}\int_{\maG_x} \vert
  \varphi\vert\de\lambda_x, \, \sup_{x\in M}\int_{\maG_x}\vert
  \varphi^*\vert\de\lambda_x \, \Bigr\}.
\end{equation*}
The completion of $\maC_c(\maG)$ with respect to the norm $\|\cdot
\|_1$ is denoted $L^1(\maG)$.

For any $x\in M$, the algebra $\maC_c(\maG)$ acts as a bounded operator on $L^2(\maG_x,\lambda_x)$.
Define for any $x\in M$ the
\emph{regular} representation $\pi_x\, \colon\, \maC_c(\maG) \to
\maL(L^2(\maG_x,\lambda_x))$ by
\begin{equation*}
  (\pi_x(\varphi)\psi) (g) \ede \varphi * \psi(g) \ede \int_{\maG_{d(g)}}
  \varphi(gh^{-1}) \psi(h) d\lambda_{d(g)}(h) \,, \quad \varphi \in
  \maC_c(\maG) \,.
\end{equation*}
We have $\|\pi_x(\varphi)\|_{L^2(\maG_x)} \leq\|\varphi \|_{L^1(\maG)}$.

\begin{definition}\label{def.regular}
We  define the \emph{reduced $C\sp{\ast}$-algebra}
$C\sp{\ast}_{r}(\maG)$ as the completion of $\maC_c(\maG)$ with
respect to the norm
\begin{equation*}
  \| \varphi\|_r \ede \sup\limits_{x \in M}\|\pi_x(\varphi)\| \,
\end{equation*}
The \emph{full $C\sp{\ast}$-algebra} associated to $\maG$, denoted
$C\sp{\ast}(\maG)$, is defined as the completion of $\maC_c(\maG)$
with respect to the norm
\begin{equation*}
  \| \varphi\| \ede \sup\limits_\pi\|\pi(\varphi)\| \,,
\end{equation*}
where $\pi$ ranges over all {\em contractive} $*$-representations of
$\maC_c(\maG)$, that is, such that $\| \pi(\varphi)\|\leq \| \varphi \|_1$, for all $\varphi \in \maC_c(\maG)$.

The groupoid~$\maG$ is said to be \emph{metrically amenable} if the
canonical surjective $*$-homomorphism $\Cstar(\maG) \to
\Cstar_{r}(\maG)$, induced by the definitions above, is also
injective.

\end{definition}

Let $\maG \tto M$ be a second countable, locally compact groupoid with
a Haar system. Let $U \subset M$ be an open $\maG$-invariant subset,
$F := M \smallsetminus U$.
Then, by the classic results of \cite{MRW87, MRW96, renault91},  $C\sp{\ast}(\maG_U)$ is a closed
  two-sided ideal of $C\sp{\ast}(\maG)$ that yields the short exact
  sequence
\begin{equation}\label{renault.exact_item1}
  0\to C\sp{\ast}(\maG_U) \to
  C\sp{\ast}(\maG)\mathop{\longrightarrow}\limits^{\rho_F}
  C\sp{\ast}(\maG_{ F })\to 0 \,,
\end{equation}
where $\rho_{F}$ is the (extended) restriction map.
 If $\maG_F$ is metrically
  a\-me\-nable, then one also has the exact sequence
\begin{equation}\label{renault.exact_item3}
  0\to C\sp{\ast}_{r}(\maG_U)\to C\sp{\ast}_{r}(\maG)
  \mathop{\xrightarrow{\hspace*{1cm}}}\limits^{(\rho_F)_{r}}
  C\sp{\ast}_{r}(\maG_F)\to 0 \,.
\end{equation}

It follows from the Five Lemma that if the groupoids $\maG_F$ and $\maG_U$ (respectively, $\maG$)
  are metrically amenable, then $\maG$ (respectively, $\maG_U$) is
  also metrically amenable.
We notice that these exact sequences correspond to a disjoint union decomposition $\maG = \maG_F \sqcup
\maG_U.$

\vspace{0.2cm}
\subsection{Pseudodifferential operators on Lie groupoids}\label{ss.ops.grpds}
We recall in this subsection the construction of pseudodifferential operators on
Lie groupoids \cite{LMN, LN, Monthubert01, Monthubert03, MonthubertPierrot, NWX}.
Let $P=(P_x)$, $x\in M$ be a smooth family of
pseudodifferential operators acting on $\maG_x:=d^{-1}(x)$. The family
$P$ is called \emph{right invariant} if $P_{r(g)}U_g=U_gP_{d(g)}$, for all
$g\in \maG$, where
\begin{equation*}
  U_g : \maC^\infty(\maG_{d(g)}) \rightarrow
  \maC^\infty(\maG_{r(g)}), \,\ (U_gf)(g')=f(g'g).
\end{equation*}
Let $k_x$ be the distributional kernel of $P_x$, $x\in M$. Note that
the support of the $P$
\begin{equation*}
  \text{supp}(P):= \overline{\bigcup_{x\in M}\text{supp}(k_x)}  \subset \{(g,g'), \ d(g)=d(g')\} \subset \maG \times \maG
\end{equation*}
  since $\text{supp}(k_x)\subset
  \maG_x\times\maG_x$. Let $\mu_1(g',g) :=
  g'g^{-1}$. The family $P = (P_x)$ is called \emph{uniformly
    supported} if its \emph{reduced support} $\text{supp}_\mu(P) :=
  \mu_1(\text{supp}(P))$ is a compact subset of $\maG$.

\begin{definition}\label{def.C*}
 The space $\Psi^{m}(\maG)$ of \emph{pseudodifferential operators of
   order $m$ on a Lie groupoid} $\maG$ with units $M$ consists
 of smooth families of pseudodifferential operators $P=(P_x)$, $x\in
 M$, with $P_x\in \Psi^m(\maG_x)$, which are {uniformly
   supported} and {right invariant}.
\end{definition}

We also denote $\Psi^\infty(\maG) := \bigcup_{m\in
  \mathbb{R}}\Psi^m(\maG)$ and $\Psi^{-\infty}(\maG) :=
\bigcap_{m\in \mathbb{R}}\Psi^m(\maG)$. We then have a
representation $\pi_0$ of $\Psi^{\infty}(\maG)$ on $\maC^\infty_c(M)$ (or
on $\maC^\infty(M)$, on $L^2(M)$, or on Sobolev spaces), called
the \emph{vector representation} uniquely determined by the equation
\begin{equation}\label{vector_repn}
  (\pi_0(P)f)\circ r := P(f\circ r),
\end{equation}
where $f\in \maC^\infty_c(M)$ and $P=(P_x)\in \Psi^m(\maG)$.  For \emph{Hausdorff} groupoids, which is the case here, by results of Koshkam and Skandalis \cite{KSkandalis}, $\pi_{0}$ is always injective, so elements of $\Psi^{\infty}(\maG)$ can be identified with operators on $M$.

If $k_x$ denotes the distributional kernel of $P_x$, $x\in M$,
then the formula
$$k_P(g):=k_{d(g)}(g, d(g))$$
defines a distribution on the groupoid $\maG$,
with $\text{supp}( k_p) = \text{supp}_\mu(P)$ compact, smooth
outside $M$ and given by an oscillatory integral on a neighborhood of $M$.
If $P\in \Psi^{-\infty}(\maG)$, then $P$ identifies with a convolution
operator with kernel a
smooth, compactly supported function and $\Psi^{-\infty}(\maG)$
identifies with the smooth convolution algebra $\maC_c^\infty(\maG)$.  In
particular, we can define
\begin{equation*}
  \|P\|_{L^1(\maG)} := \sup\limits_{x\in M} \Big\{ \
  \int_{\maG_x}|k_P(g^{-1})|\, d\mu_x(g),\,\, \int_{\maG_x}|k_P (g)|\,
  d\mu_x(g)\ \Big\}.
\end{equation*}

For each $x\in M$, the {\em regular representation} $\pi_x$ extends to $\Psi^\infty(\maG)$,
defined by $\pi_x(P)=P_x$. It is clear that if $P \in \Psi^{-n-1}(\maG)$
$$\|\pi_x(P)\|_{L^2(\maG_x)} \leq\|P\|_{L^1(\maG)}.$$
The {\em reduced $C^*$--norm} and the {\em full norm} of $P$ are defined by
\begin{equation*}
  \|P\|_r = \sup\limits_{x\in M}\|\pi_x(P)\| = \sup\limits_{x\in
    M}\|P_x\|, \quad \quad \mbox{ and } \quad \|P\| = \sup\limits_{\rho}\|\rho(P)\|,
\end{equation*}
where $\rho$ ranges over all bounded representations of $\Psi^0(\maG)$
satisfying
\begin{equation*}
  \|\rho(P)\| \leqslant \|P\|_{L^1(\maG)}\quad \text{for all} \quad  P\in
  \Psi^{-\infty}(\maG).
\end{equation*}
We obtain $C^*(\maG)$, respectively, $C^*_r(\maG)$, to be the
completion of $\Psi^{-\infty}(\maG)$ in the norm $\|\cdot\|$,
respectively, $\|\cdot\|_r$.

Since the algebras $\Psi^m(\maG)$ are too small to contain resolvents,
we consider its $L^{m}_{s}(\maG)$ completion with respect to the norm
\begin{equation*}\label{eq.def.ms}
  \|P\|_{m, s} \ede \|(1 + \Delta)\sp{(s-m)/2} P
  (1 + \Delta)\sp{-s/2} \|_{L^2 \to L^2}.
\end{equation*}
The space $L^{m}_{s}(\maG)$
is the {\em norm closure} of $\Psi^m(\maG)$ in the topology of
continuous operators $H^s(M )\to H^{s-m}(M)$, where as usual, $H^{s}(M)$ is the domain of $(1 + \Delta)^{s/2}$, if $s \ge 0$,
whenever $M$ is compact (see
\cite{AIN, Grosse.Schneider.2013, LN}).
 Moreover, let
\begin{equation*}
 \maW^{m}(\maG) \ede \Psi^{m}(\maG) + \cap_{s} L^{-\infty}_{s}(\maG)\,.
\end{equation*}
Then $\maW^{m}(\maG) \subset L^{m}_{s}(\maG)$ and $\maW^{\infty}(\maG)$
is an algebra of pseudodifferential operators that contains the inverses
of its $L^2$-invertible operators.

\vspace{0.2cm}
Let us give some examples of Lie groupoids that will have a role in our constructions.

\begin{example}[Bundles of Lie groups]
Any Lie group $G$ can be regarded as a Lie groupoid $\maG=G$ with exactly one unit $M= \{e\}$,
the identity element of $G$.
We have $\Psi^m(\maG) \simeq \Psi^m_{\text{prop}}(G)^G$,
the algebra of right translation invariant and properly supported pseudodifferential operators on $G$.

More generally, we can let $\maG \tto B$ be a locally trivial bundle of groups, with $d = r$,
 with fiber a locally compact group $G$.
 It is metrically amenable if, and only if,
 the group $G$ is
 amenable.
\end{example}

\begin{example}[Pair groupoids]\label{pair_gpd}
Let $M$ be a smooth manifold (without corners). Then the {\em pair groupoid} $\maG := M \times M$ of $M$ is the
groupoid having {\em exactly} one arrow between any two units.
In this case, we have $\Psi^m(\maG) \simeq \Psi^m_{\text{comp}}(M)$,
the algebra of compactly supported pseudodifferential operators on $M$. For any $x\in M$, the regular
representation $\pi_x$ defines an isomorphism between
$C\sp{\ast}(M\times M)$ and the ideal of compact operators in
$\maL(L^2(M))$. In particular, all pair groupoids are metrically
amenable.
\end{example}

\begin{example}[Fibered pull-back groupoids]\label{fib_pair_gpd}
Let
$\maH \tto B$ be a groupoid and $f : M \to B$ be continuous. An important
generalization of the pair groupoid is the {\em fibered pull-back
  groupoid}:
\begin{equation*}
  f\pullback (\maH) \ede \{\, (m, g, m\rp) \in M \times \maH \times M,
  \ f(m) = r(g),\, d(g) = f(m\rp) \, \} \,,
\end{equation*}
with units $M$ and $(m, g, m\rp) (m\rp, g\rp, m\drp) = (m, g
g\rp, m\drp)$. It is a subgroupoid of the product of the pair groupoid $M\times M$ and $\maH$.
If $\maH$ is a Lie groupoid and $f$ is a tame submersion, then $f\pullback
 (\maH)$ is a Lie groupoid.

Let $\maH \tto B$ be a locally trivial bundle of groups (so $d = r$)
 with fiber a locally compact group $G$.  Also, let $f : M \to
 B$ be a continuous map that is a local fibration. Then $f\pullback
 (\maH)$ is a locally compact groupoid with a Haar system. If $G$ is a Lie group, $M$ is a manifold with corners and $f$ is a tame submersion, then $f\pullback
 (\maH)$ is a Lie groupoid.
Again,  it is metrically amenable if, and only if,
 the group $G$ is
 amenable.
\end{example}

\begin{example}[Disjoint unions]\label{ex.help-for-lp}
Let  $M$ be a {smooth } manifold and let $\maP=\{M_{i}\}_{i=1}^{p}$ be a \emph{finite} partition of $M$ into smooth disjoint, closed  submanifolds $M_{i}\subset M$ (since $\maP$ is finite, $M_{i}$ is also open, $i=1,..., p$, and the sets $M_{i}$ are always given by unions of connected components of $M$).
Let $f: M \to \maP$, $x \mapsto M_{i}$, with $x\in M_{i}$, be the quotient map .
Then $\maP$ is discrete and $f$ is locally constant,
so any Lie groupoid $\maH \tto \maP$ yields a Lie groupoid $f\pullback(\maH)\tto M$.
In particular, if $\maH=\maP$ as a (smooth, discrete) manifold, then
$f\pullback(\maP)$ is the topological disjoint union
$$f\pullback(\maP)= \bigsqcup_{i=1}^{p}(M_{i} \times M_{i}).$$
Let $G$ be a Lie group and $\maH := B \times
G$, the product of a manifold and a Lie group, then
$$f\pullback(\maH)= \bigsqcup_{i}^{p}(M_{i} \times M_{i}) \times G.$$
\end{example}

\begin{example}[Transformation groupoid]\label{transformation}
If $G$ is a Lie group acting smoothly from the right on a manifold $M$,
the associated transformation groupoid is defined to be $\maG:=M \rtimes G \tto M\times\{e\} \simeq M$
in which $\maG^{(1)}= M \times G$, with
the domain map $d(m, g)=m\cdot g$ and the range map $r(m,g)= m$,

One case of interest here is when $\maG:=[0,\infty) \rtimes (0,\infty) $ is the transformation groupoid
with the action of $(0,\infty)$ on $[0,\infty)$ by dilation. Then the $C^*$-algebra associated to $\cG$ is the algebra of Wiener-Hopf
operators on $\RR^+$, and its unitalization is the algebra of Toeplitz
operators  \cite{MRen}.
\end{example}

\begin{example}[$b$-groupoid]\label{bgrpd}

Let $M$ be a manifold with smooth boundary and let $\cV_b$ denote the
class of vector fields on $M$ that are tangent to the boundary.
The associated groupoid was
defined in \cite{MelroseAPS, Monthubert03, NWX}.
Let
$$
    \cG_b:=\Big( \bigcup\limits_{j} \RR^+\times (\pa_j M)^2\Big)
    \quad \cup \quad M_0^2,
$$
where $M_0^2$ denotes the pair groupoid of $M_0:= int(M)$ and $\pa_j
M$ denote the connected components of $\pa M$. Then $\cG_b$ can be
given the structure of a Lie groupoid with units $M$, given locally by a transformation groupoid.
It integrates the so-called $b$-tangent bundle ${^bTM}$, that is, $A(\cG_b)= {^bTM}$, the Lie algebroid whose space of sections is given by vector fields tangent to the boundary. The
pseudodifferential calculus obtained is Melrose's small $b$-calculus with compact supports. See
\cite{MelroseAPS, Monthubert03, MonthubertPierrot, NWX} for details.
\end{example}


\smallskip
\subsection{Fredholm groupoids}\label{ss.fredholm}
Fredholm groupoids were introduced in \cite{CNQ, CNQ17} as groupoids for which an operator is Fredholm if,
and only if, its principal symbol and all its boundary restrictions
are invertible, in a sense to be made precise. We
review their definition and properties in this subsection.
\vspace{0.2cm}

Let $\maG \tto M$ be a Lie groupoid with $M$ compact, and assume that $U \subset M$
is an open, $\maG$-invariant subset such that $\maG_U \simeq U \times U$ (the pair groupoid, see Example \ref{pair_gpd}).
Let $\pi_0$ be the vector representation of $\Psi^\infty(\maG)$ on $\maC^\infty(M)$ uniquely
determined by Equation (\ref{vector_repn}). For any $x \in U$,
the regular representation $$\pi_x: \Psi^\infty(\maG) \rightarrow \text{End}(\maC_c^\infty(\maG_x))$$
is equivalent to $\pi_0$ via the range map $r: \maG_{x} \rightarrow U$  which defines a bijection.  Moreover, for Hausdorff groupoids,  the vector representation $\pi_0 \colon C_r\sp{\ast}(\maG) \to
  \maL(L^2(U))$ is injective and defines an isomorphism
  $C_r\sp{\ast}(\maG_U) \simeq \pi_0(C_r\sp{\ast}(\maG_U)) = \maK$, the
  algebra of compact operators on $L\sp{2}(U)$. We
 identify $C^{\ast}_r(\maG)$ with its image under $\pi_0$,
that is, with a class of operators on $L^2(U)$, without further
comment.

\begin{definition}
A Lie groupoid $\maG \tto M$ is called a {\em Fredholm Lie groupoid} provided that
\begin{enumerate}
\item There exists an open, dense, $\maG$-invariant subset $U\subset M$ such that $\maG_U \simeq U \times U$.
\item For any $a\in C^*_r(\maG)$, we have that $1+a$ is Fredholm if,
and only if, all $1+\pi_x(a)$, $x\in F:=M\backslash U$ are invertible.
\end{enumerate}
\end{definition}

A simple observation is that $F:=M \backslash U$ is closed and $\maG$-invariant
since $U$ is a dense open set and hence completely determined by $\maG$. We shall keep this notation throughout the paper. Note also that  two regular representations $\pi_x$ and
$\pi_y$ are unitarily equivalent if , and only if, there is $g \in \maG$ such that
$d(g) = x$ and $r(g) = y$, that is, if $x, y$ are in the same orbit (of
$\maG$ acting on $M$). In particular, one only needs to verify (2) for a representative of each orbit of $\maG_{F}$.

In \cite{CNQ, CNQ17}, we gave easier to check conditions for a groupoid $\maG$ with an open, dense, subset $U$ as above, to be Freholm, depending on properties of  representations of $C_r\sp{\ast}(\maG)$. We review briefly the main notions, see \cite{nistorPrudhon, Roch} for details.

Let $A$ be a $C^*$-algebra. Recall that a two-sided ideal
$I \subset A$ is said to be {\em primitive} if it is the kernel of an irreducible
representation of $A$. We denote by $\Prim(A)$ the set of primitive ideals of $A$
and we equip it with the hull-kernel topology {(see \cite{DavidsonBook, williamsBook} for more details)}.
Let $\phi$ be a representation of $A$. The {\em support} $\supp(\phi)\subset \Prim(A)$
is defined to be the set of primitive
ideals of $A$ that contain $\ker(\phi)$.
Then in \cite{nistorPrudhon} a set of $\maF$ of representations of a
$C^*$-algebra $A$ is said to be {\em exhaustive} if $\Prim(A)= \bigcup_{\phi\in \maF} \supp(\phi)$,
that is, if any irreducible representation is weakly contained in some $\phi\in\maF$.

If $A$ is unital, then a set $\maF$ of representations of $A$ is called {\em
  strictly spectral} if  it characterizes invertibility in $A$, in that $a \in A$ is invertible if, and only if, $\phi(a)$ is invertible
for all $\phi \in \maF$. If $A$ does not have a unit, we replace
$A$ with $A\sp{+} := A \oplus \CC$ and $\maF$ with $\maF\sp{+} := \maF
\cup \{\chi_0 : A\sp{+} \to \CC\}$,
where $\maF$ is regarded as a family of representations of $A^+$.
Note that strictly spectral families of representations consist
of non-degenerate representations, and any non-degenerate representation of
a (closed, two-sided) ideal in a $C\sp{\ast}$-algebra always has a unique
extension to the whole algebra \cite{nistorPrudhon}.

It was proved in \cite{nistorPrudhon, Roch} that, if $\maF$ is exhaustive, then $\maF$ is strictly
spectral, and the converse also holds  if $A$ is separable.

The next result was given  in \cite{CNQ, CNQ17} and  gives a characterization of Fredholm
groupoids. For a groupoid $\maG$, we usually denote by
 $\maR(\maG)$ the set of its regular representations.

\begin{theorem}  \label{thm.Fredholm.Cond}
Let $\maG \tto M$ be a Lie groupoid and $U$ an open, dense, $\maG$-invariant subset  such that $\maG_U \simeq U \times U$, $F=M \backslash U$. If $\maG$ is a Fredholm groupoid, we have:
\begin{enumerate}[(i)]

\item The canonical projection induces an isomorphism
  $C_r\sp{\ast}(\maG)/C_r\sp{\ast}(\maG_{U}) \simeq
  C_r\sp{\ast}(\maG_F)$, that is, we have the exact sequence
  \begin{equation*}
  0  \longrightarrow C\sp{\ast}_{r}(\maG_U)\cong \maK \longrightarrow C\sp{\ast}_{r}(\maG)
  \mathop{\xrightarrow{\hspace*{1cm}}}\limits^{(\rho_F)_{r}}
  C\sp{\ast}_{r}(\maG_F)  \longrightarrow 0 \,.
\end{equation*}
\item $\maR(\maG_{F})=\{\pi_x,\, x \in F\}$ is a strictly spectral, or equivalently, an exhaustive, set of
  representations of $C_r\sp{\ast}(\maG_F)$.
\end{enumerate}

Conversely, if $\maG \tto M$ satisfies (i) and (ii), then, for any
unital $C^{\ast}$-algebra $\mathbfPsi$ containing $C^{\ast}_r(\maG)$
as an essential ideal,
and for any $a \in \mathbfPsi $, we have that
$a$ is Fredholm on $L^2(U)$ if, and only if, $\pi_x(a)$ is invertible for
each $x \notin U$ {\bf and} the image of $a$ in
$\mathbfPsi/C^{\ast}_r(\maG)$ is invertible.
\end{theorem}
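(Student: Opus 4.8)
The plan is to prove both implications by working in the concrete picture supplied by the vector representation. Throughout, identify $\rCs{\maG}$ with its faithful image under $\pi_0$ inside $\maL(L^2(U))$, so that $\rCs{\maG_U}$ becomes exactly the ideal $\maK$ of compact operators and hence $\rCs{\maG}\cap\maK=\maK=\rCs{\maG_U}$. Two standing reductions are used repeatedly. First, since $\maK$ is essential in $\rCs{\maG}$ and $\rCs{\maG}$ is, by hypothesis, essential in the unital algebra $\mathbfPsi$, the ideal $\maK$ is essential in $\mathbfPsi$, so $\mathbfPsi$ embeds in the multiplier algebra $M(\maK)=\maL(L^2(U))$ with $1_{\mathbfPsi}$ acting as the identity operator; thus \emph{$a$ is Fredholm on $L^2(U)$} is meaningful and equivalent to invertibility of $a+\maK$ in the Calkin algebra $\maL(L^2(U))/\maK$. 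Second, since $\mathbfPsi/\maK$ is a unital $C^*$-subalgebra of the Calkin algebra with the same unit, spectral permanence lets me test that invertibility inside $B:=\mathbfPsi/\maK$. Finally, for $x\in F$ the regular representation $\pi_x$ annihilates $\rCs{\maG_U}=\maK$ (the orbit of $x$ misses the invariant set $U$), so $\pi_x$ descends to $\rCs{\maG_F}$ and, being non-degenerate, extends uniquely to $B$; I write $\tilde\pi_x$ for this extension and record $\tilde\pi_x(a+\maK)=\pi_x(a)$.

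For the converse, assume (i) and (ii). By (i) the ideal $J:=\rCs{\maG}/\maK\subset B$ is isomorphic to $\rCs{\maG_F}$, with quotient $B/J\simeq\mathbfPsi/\rCs{\maG}$ and quotient map $\lambda$. I claim that $\{\lambda\}\cup\{\tilde\pi_x:x\in F\}$ is an \emph{exhaustive} family of representations of the unital algebra $B$. This rests on the decomposition $\Prim(B)=\Prim(J)\sqcup\Prim(B/J)$ into the open part $\{P\not\supseteq J\}$ and the closed part $\{P\supseteq J\}$: the support of $\lambda$ is precisely $\Prim(B/J)$, while by (ii) the $\pi_x$ are exhaustive on $J\simeq\rCs{\maG_F}$, so $\bigcup_{x\in F}\supp(\tilde\pi_x)\supseteq\Prim(J)$. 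Hence the supports cover $\Prim(B)$, the family is exhaustive, and therefore strictly spectral by the result of \cite{nistorPrudhon,Roch} recalled above. Applying strict spectrality to $a+\maK\in B$ and unwinding the identifications, $a$ is Fredholm on $L^2(U)$ if and only if $\lambda(a+\maK)$ is invertible---that is, the image of $a$ in $\mathbfPsi/\rCs{\maG}$ is invertible---and $\tilde\pi_x(a+\maK)=\pi_x(a)$ is invertible for every $x\in F$, which is the asserted criterion.

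For the forward direction, assume $\maG$ is Fredholm and take $\mathbfPsi=\rCs{\maG}^{+}$. Writing $Q:=\rCs{\maG}/\rCs{\maG_U}$, there is always a canonical surjection $q:Q\to\rCs{\maG_F}$, and each $\pi_x$ ($x\in F$) factors as $\bar\pi_x=\pi_x\circ q$. As above, for $a\in\rCs{\maG}$ the operator $1+a$ is Fredholm if and only if $1+(a+\maK)$ is invertible in $Q^{+}=\rCs{\maG}^{+}/\maK$; comparing with condition (2) of the definition, which says $1+a$ is Fredholm if and only if every $1+\pi_x(a)=1+\bar\pi_x(a+\maK)$ is invertible, and accounting for the scalar character $\chi_0$ exactly as in the non-unital definition of strict spectrality, I conclude that $\{\bar\pi_x:x\in F\}$ is a strictly spectral family for $Q$. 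This forces $q$ to be injective, proving (i): if $\ker q\neq0$, pick $0\le b\in\ker q$ with $\|b\|=1$; then $\bar\pi_x(1-b)=1$ and $\chi_0(1-b)=1$ are invertible for all $x$, so strict spectrality would make $1-b$ invertible in $Q^{+}$, contradicting $1\in\sigma(b)$. With $q$ an isomorphism, the exact sequence of (i) is precisely the reduced sequence \eqref{renault.exact_item3}, and the strict spectrality of $\{\bar\pi_x\}$ transports through $q$ to the strict (equivalently, exhaustive) spectrality of $\{\pi_x:x\in F\}$ on $\rCs{\maG_F}$, which is (ii).

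The main obstacle is (i) in the forward direction: reduced groupoid $C^*$-algebras do not give exact restriction sequences in general (that requires metric amenability of $\maG_F$), so the isomorphism $Q\simeq\rCs{\maG_F}$ cannot be taken for granted and must be extracted from the Fredholm hypothesis itself. The device above---using strict spectrality of $\{\bar\pi_x\}$ to rule out a nonzero positive norm-one element of $\ker q$---is exactly what converts the analytic Fredholm condition into the algebraic exactness statement, and it is the step I expect to require the most care.
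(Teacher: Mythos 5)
Your proof is correct, but note that there is nothing in this paper to compare it against: Theorem \ref{thm.Fredholm.Cond} is stated here without proof and attributed to \cite{CNQ, CNQ17}. What you have written is essentially a reconstruction of the argument of those references, built on precisely the toolkit this paper recalls around the statement: the identification $C^*_r(\maG_U)\cong\maK$ under the vector representation, Atkinson's theorem combined with spectral permanence to turn Fredholmness of $a$ into invertibility of $a+\maK$ in $B:=\mathbfPsi/\maK$, unique extensions of non-degenerate representations of ideals, the decomposition $\Prim(B)=\Prim(J)\sqcup\Prim(B/J)$, and the implication exhaustive $\Rightarrow$ strictly spectral from \cite{nistorPrudhon, Roch}. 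Two steps are asserted rather than proved, and both merit a line even though they are standard: (1) in the converse direction, the inclusion of $\Prim(J)$ in $\bigcup_{x\in F}\supp(\tilde\pi_x)$ uses the fact that if a primitive ideal $Q$ of $J$ contains $\ker\pi_x$, then the corresponding primitive ideal $P\in\Prim(B)$ contains $\ker\tilde\pi_x$; this follows from the identity $\ker\tilde\pi=\{\,b\in B: bJ\subseteq\ker\pi\,\}$, valid for non-degenerate representations $\pi$ of the ideal $J$ (equivalently, one can quote the lemma on exhaustive families and ideals from \cite{nistorPrudhon}). (2) In the forward direction, the existence of the canonical surjection $q$ at the reduced level rests on the computation $\pi^{\maG}_x(\varphi)=\pi^{\maG_F}_x(\varphi|_{\maG_F})$ for $x\in F$ and $\varphi\in\maC_c(\maG)$ (valid because $F$ is invariant, so $gh^{-1}\in\maG_F$ whenever $g,h\in\maG_x$), which shows the restriction map is contractive for the reduced norms; the same computation is what justifies the factorization $\bar\pi_x=\pi_x\circ q$ that your injectivity argument needs. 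With those two remarks supplied the argument is complete; in particular your positive-element trick, deducing $\ker q=0$ from strict spectrality of $\{\bar\pi_x\}$, is a clean mechanism for extracting the exactness in (i) from the Fredholm hypothesis, and you are right that this is exactly the point where working with reduced (rather than full) groupoid $C^*$-algebras makes the claim non-trivial.
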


In \cite{CNQ, CNQ17}, we dubbed condition (ii) as {\em Exel's property} (for  $\maG_F$). If $\maR(\maG_{F})=\{\pi_x,\, x \in F\}$ is a strictly spectral/ exhaustive, set of
  representations of $C\sp{\ast}(\maG_F)$, then $\maG_{F}$ is said to have {\em Exel's strong property}. In this case, it is metrically amenable.
We will use the sufficient conditions in Theorem \ref{thm.Fredholm.Cond} in the following form:

\begin{proposition}\label{cor.Fredholm.Cond}
Let $\maG \tto M$ be a Lie groupoid and $U$ an open, dense, $\maG$-invariant subset  such that $\maG_U \simeq U \times U$, $F=M \backslash U$.  Assume $\maR(\maG_{F})=\{\pi_x,\, x \in F\}$ is a strictly spectral, or equivalently, an exhaustive, set of
  representations of $C\sp{\ast}(\maG_F)$. Then $\maG$ is Fredholm and metrically amenable.
  \end{proposition}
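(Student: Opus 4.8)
This proposition is essentially a corollary of Theorem~\ref{thm.Fredholm.Cond}: the plan is to deduce its two conditions (i) and (ii) from the hypothesis and then read off both conclusions. The only point requiring care is that the hypothesis concerns the \emph{full} algebra $C\sp{\ast}(\maG_F)$, while conditions (i)--(ii) are stated for the \emph{reduced} algebra $C_r\sp{\ast}(\maG_F)$; the bridge between the two is the metric amenability of $\maG_F$, which I would establish first.

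Indeed, the hypothesis is precisely Exel's strong property for $\maG_F$, so by the remark following Theorem~\ref{thm.Fredholm.Cond} the groupoid $\maG_F$ is metrically amenable; equivalently, the canonical surjection $C\sp{\ast}(\maG_F)\to C_r\sp{\ast}(\maG_F)$ is an isomorphism. From this two facts follow. First, since $\maG_F$ is metrically amenable, the reduced restriction sequence (\ref{renault.exact_item3}) is exact, and, as $M$ is compact and $\maG_U\simeq U\times U$ is a pair groupoid, the vector representation identifies $C_r\sp{\ast}(\maG_U)$ with the compact operators $\maK$ on $L^2(U)$; this is exactly condition (i). Second, under the identification $C\sp{\ast}(\maG_F)=C_r\sp{\ast}(\maG_F)$, the assumed strictly spectral (equivalently, exhaustive) property of $\maR(\maG_F)$ for the full algebra is literally the same statement for the reduced algebra, which is condition (ii).

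With (i) and (ii) in hand, I would apply the converse half of Theorem~\ref{thm.Fredholm.Cond} to the minimal unitalization $\mathbfPsi:=C_r\sp{\ast}(\maG)^{+}$, in which the non-unital algebra $C_r\sp{\ast}(\maG)$ sits as an essential ideal. For $a\in C_r\sp{\ast}(\maG)$, the image of $1+a$ in the quotient $\mathbfPsi/C_r\sp{\ast}(\maG)\cong\CC$ is the unit and so is automatically invertible; the theorem then collapses to the statement that $1+a$ is Fredholm on $L^2(U)$ if, and only if, $1+\pi_x(a)$ is invertible for every $x\in F$. This is condition (2) of the definition of a Fredholm groupoid, and condition (1) is part of the hypotheses, so $\maG$ is Fredholm. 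Finally, $\maG_U$ is metrically amenable as a pair groupoid (Example~\ref{pair_gpd}) and $\maG_F$ is metrically amenable by the first step; since $\maG=\maG_U\sqcup\maG_F$, the Five Lemma comparison of the sequences (\ref{renault.exact_item1}) and (\ref{renault.exact_item3}) recorded in the text forces $\maG$ to be metrically amenable as well. I expect the full-versus-reduced passage for $\maG_F$ to be the only genuine subtlety; once its metric amenability is secured, the remaining steps are direct applications of the quoted results.
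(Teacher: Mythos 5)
Your proposal is correct and follows essentially the same route as the paper's proof: establish metric amenability of $\maG_F$ from Exel's strong property, use the exact sequences \eqref{renault.exact_item1} and \eqref{renault.exact_item3} together with metric amenability of the pair groupoid $\maG_U$ to obtain condition (i) and metric amenability of $\maG$, note that condition (ii) is then the hypothesis read in $C_r\sp{\ast}(\maG_F)$, and conclude Fredholmness from the converse part of Theorem~\ref{thm.Fredholm.Cond} applied to a unitalization (the paper takes $\mathbfPsi=\left(C\sp{\ast}(\maG)\right)^{+}$, which coincides with your $C_r\sp{\ast}(\maG)^{+}$ once metric amenability of $\maG$ is in hand). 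Your write-up is in fact slightly more explicit than the paper's on the full-versus-reduced passage and on why the quotient condition in Theorem~\ref{thm.Fredholm.Cond} is automatic for elements of the form $1+a$.
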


\begin{proof}
Condition (ii)  in Theorem \ref{thm.Fredholm.Cond}  holds by assumption.
If $\maR(\maG_{F})$ is a strictly spectral set of
  representations of $C\sp{\ast}(\maG_F)$ then, by definition, the reduced and full norms coincide, hence $\maG_{F}$  is metrically amenable. It follows from the exact sequences  \eqref{renault.exact_item1} and \eqref{renault.exact_item3}, since $\maG_{U} \simeq U \times U$ is metrically amenable, that $\maG$ is metrically amenable and that condition (i) in Theorem \ref{thm.Fredholm.Cond} also holds.
   Taking the unitalization $\mathbfPsi:=\left(C^{\ast}(\maG)\right)^{+}$, we have then that $\maG$ is Fredholm.
\end{proof}

Representations are extended to matrix algebras in the
obvious way, which allows us to treat operators on vector bundles.

\begin{remark}
The notion of exhaustive family can be linked to that of $EH$-amenability and to the Effros-Hahn conjecture \cite{CNQ17, nistorPrudhon}.
Let $\maG\tto F$ be an $EH$-amenable locally compact groupoid.  Then
the family of regular representations $\{\pi_y, y \in F\}$ of $C\sp{\ast}(\maG)$
is exhaustive, hence strictly spectral.
Hence if $U$ is a dense invariant subset such that $\maG_{U}$ is the pair groupoid and $\maG_{F}$ is $EH$-amenable, then $\maG$ is Fredholm. Combining with the proof of the generalized EH conjecture
\cite{ionescuWilliamsEHC, renault87, renault91} for amenable,
Hausdorff, second countable groupoids, we get a set of sufficient conditions for $\maG$ to be Fredholm.
\end{remark}

\begin{example}\label{expl.transfgroupoidFredholm}
Let $\overline{\maH}=[0,\infty] \rtimes (0,\infty)$ be the transformation groupoid
with the action of $(0,\infty)$ on $[0,\infty ]$ by dilation,
(that is, $\overline{\maH}$ is the extension of the groupoid in Example \ref{transformation} to the one point compactification of $[0,\infty)$).
Then $\overline{\maH}$ is Fredholm.

It is clear that $(0,\infty)\subset [0,\infty]$ is an invariant open dense subset, and $\overline{\maH}|_{(0,\infty)} \simeq (0,\infty)^2$,
the pair groupoid of $(0,\infty)$.
Then $F=\{0, \infty\}$, $\overline{\maH}_{F}\cong (0,\infty)\sqcup (0,\infty)$, the disjoint union of two amenable Lie groups, and $C\sp{\ast}(\overline{\maH}_{F})\cong \maC_{0}(\RR^{+}) \oplus \maC_{0}(\RR^{+})$. Hence $\overline{\maH}_{F}$ has Exel's property (the regular representations at $0$ and $\infty$ are induced from the regular representation of the group, which is just convolution).

Note that if we have a convolution operator $K$ on the abellian group $(0,\infty)$, for instance the double layer potential operator, we can identify $K$ with a family of convolution operators $K_x$, $x\in (0,\infty)$ (we use the fact that the action groupoid $(0,\infty) \rtimes (0,\infty)$ is isomorphic to the pair groupoid of $(0,\infty)$.)
Since each $K_x$ is a convolution operator, we can
always extend by continuity the family $K_x$, $x\in (0,\infty)$ to the family $K_x$, $x\in [0,\infty]$ (two endpoints included).
In this way, we identify $K$ with an operator on the groupoid $[0,\infty] \rtimes (0,\infty)$ (note however, that the reduced support of $K$ may not be compact, so  it might not be a pseudodifferential operator on the groupoid $\overline{\maH}$, according to our previous definition).

\end{example}

The characterization of Fredholm groupoids given above, together with the properties of exhaustive / strictly spectral families, allows us to show that large classes of groupoids are Fredholm. For instance, it is easy to see that the product of Fredholm groupoids $\maG_1 \times \maG_2 \tto M_1 \times M_2$ is also Fredholm, as the regular representations of the product are direct sums of the regular  representations of $\maG_{1}$ and $\maG_{2}$. Moreover, the gluing of Fredholm groupoids along an open subset of $U$ is also Fredholm. One important observation is that Fredholmness is preserved under groupoid equivalence, as this relation  yields Morita equivalence of the groupoid $C^{*}$-algebras and isomorphic primitive ideals spaces, by the classical results of \cite{MRW87,rieffelInducedCstar, SW12,Tu04}.

In the next example, we see an important class of Lie
groupoids  for which the set  of regular
representations is an exhaustive set of representations of
$\Cs{\maG}$. The point is that
 locally, our groupoid is the product of a group $G$ and a space, so its
$C^*$-algebra  is of the form $C^*(G) \otimes \maK$, where
$\maK$ are the compact operators.
See \cite{CNQ} (Proposition 3.10) for a complete proof.

\begin{example}\label{ex.Exel}
 Let $\maH \tto B$ be a locally trivial bundle of groups, so $d = r$,
 with fiber a locally compact group $G$. Then $\maH$ has Exel's property, that is, the set of regular representations $\maR(\maH)$ is exhaustive / strictly spectral for $C^{\ast}_r(\maH)$, since any irreducible representation of $C_{r}^{\ast}(\maH)$ factors through evaluation  at $\maH_{x}=G$, and
the regular representations of $\maH$  are obtained from the regular representation of $G$.
  It is exhaustive for the full algebra $C^{\ast}(\maH)$ if, and only if,
 the group $G$ is
 amenable.

 More generally, let $f : M \to
 B$ be a continuous map. Then $\maG=f\pullback
 (\maH)$ is a locally compact groupoid with a Haar system that also
 has Exel's property, and $\maR(\maG)$ is exhaustive/ strictly spectral for $C^{\ast}(\maG)$ if, and only if,
 the group $G$ is
 amenable. (Note that $G$ coincides with the isotropy group $\maH_{x}^{x}$, for $x\in M$.)
  Moreover, we have that  $\maH$ and $\maG=f\pullback
 (\maH)$ are equivalent groupoids.

\begin{remark}
In fact,  $f\pullback (\maH)$
 satisfies the generalized EH conjecture, and hence it has the
 weak-inclusion property. It will be EH-amenable if, and only if, the group $G$ is
 amenable (see \cite{CNQ17}).
 \end{remark}

\end{example}

Putting together the previous example and Proposition \ref{cor.Fredholm.Cond}, we conclude the following:
\begin{corollary}\label{prop.fred}
If $\maG\tto M$ is a Lie groupoid, $U\subset M$ is an open, dense, invariant subset, $F=M\setminus U$, and we have a decomposition
\begin{equation*}
\maG = (U\times U) \sqcup f\pullback
 (\maH)
\end{equation*}
where $f: F\to B$ is a { tame submersion }and $\maH\tto B$ is  a bundle of  amenable Lie groups, then $\maG$ is Fredholm.
\end{corollary}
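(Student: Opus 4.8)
The plan is to verify directly that the hypotheses of Proposition \ref{cor.Fredholm.Cond} are satisfied, since that proposition already converts the statement ``$\maR(\maG_F)$ is strictly spectral/exhaustive'' into the desired conclusion that $\maG$ is Fredholm. Thus the entire task reduces to identifying $\maG_F$ and showing it has Exel's property. First I would observe that, by the very hypothesis of the corollary, the decomposition $\maG = (U\times U) \sqcup f\pullback(\maH)$ means precisely that the open part $\maG_U$ is the pair groupoid $U\times U$ and that the restriction to the complement is $\maG_F = f\pullback(\maH)$, where $f\colon F \to B$ is a tame submersion and $\maH\tto B$ is a bundle of amenable Lie groups. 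Here I am using the disjoint-union decomposition $\maG = \maG_F \sqcup \maG_U$ noted after the exact sequences \eqref{renault.exact_item1}--\eqref{renault.exact_item3}, so that the second piece of the given decomposition is exactly the reduction $\maG_F$.

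Next I would invoke Example \ref{ex.Exel} applied to the fibered pull-back groupoid $f\pullback(\maH)$. That example states that for a bundle of groups $\maH\tto B$ with fiber a locally compact group $G$ and any continuous $f\colon M \to B$, the fibered pull-back $f\pullback(\maH)$ has Exel's property, and that $\maR(f\pullback(\maH))$ is exhaustive (equivalently strictly spectral) for the \emph{full} algebra $C^{\ast}(f\pullback(\maH))$ if and only if the fiber group $G$ is amenable. Since by assumption $\maH$ is a bundle of \emph{amenable} Lie groups, the amenability condition is met, so $\maR(\maG_F) = \{\pi_x, \, x\in F\}$ is an exhaustive, hence strictly spectral, set of representations of $C^{\ast}(\maG_F)$. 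This is exactly the hypothesis required by Proposition \ref{cor.Fredholm.Cond}.

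Finally I would apply Proposition \ref{cor.Fredholm.Cond} with this $U$, $F$, and $\maG$: since $\maG_U \simeq U\times U$ is the pair groupoid and $\maR(\maG_F)$ is strictly spectral for $C^{\ast}(\maG_F)$, the proposition yields immediately that $\maG$ is Fredholm (and, as a bonus, metrically amenable). The argument is essentially a bookkeeping step, chaining Example \ref{ex.Exel} into Proposition \ref{cor.Fredholm.Cond}. I expect the only point requiring genuine care---the ``main obstacle,'' insofar as there is one---to be the verification that the given set-theoretic decomposition really does identify the second summand with the reduction $\maG_F$ as a \emph{Lie} groupoid compatibly with its structure maps, and that $f\pullback(\maH)$ is a genuine Lie groupoid in the sense needed; this is guaranteed by Example \ref{fib_pair_gpd} precisely because $f$ is a tame submersion and $\maH$ is a bundle of Lie groups, so no additional analytic input is needed beyond quoting the earlier examples.
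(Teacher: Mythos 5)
Your proposal is correct and follows essentially the same route as the paper, which derives this corollary precisely by combining Example \ref{ex.Exel} (identifying $\maG_F = f\pullback(\maH)$ and concluding that $\maR(\maG_F)$ is exhaustive, hence strictly spectral, for $C^{\ast}(\maG_F)$ because the fiber groups are amenable) with Proposition \ref{cor.Fredholm.Cond}. Your closing remark about checking that $f\pullback(\maH)$ is a genuine Lie groupoid via Example \ref{fib_pair_gpd} is the same implicit bookkeeping the paper relies on, so there is nothing to add.
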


More generally, if we have a filtration of $M$ and our groupoid is given by fibered pull-backs on the strata then it will still be Fredholm - what we call in \cite{CNQ} \emph{\ssub\ groupoids}.
Several examples of Fredholm groupoids can be found in \cite{CNQ} (Section 5, see also \cite{CNQ17}), and include the $b$-groupoid modelling manifolds with poly-cylindrical ends, groupoids modelling analysis on asymptotically Euclidean space, asymptotically hyperbolic
space, and the edge groupoids  (in fact, these are all \ssub\ groupoids).

\smallskip

We consider Fredholm groupoids because of their applications
to Fredholm conditions. Let  $\Psi\sp{m}(\maG)$ be the space of order $m$,
classical pseudodifferential operators $P = (P_x)_{x \in M}$ on $\maG$, as in the previous subsection.
Then each $P_x \in
\Psi\sp{m}(\maG_x)$, $x \in M$ and $P_x=\pi_x(P)$, for the regular
representation $\pi_x, x\in M$.  Also, $P$ acts on $U$ via $P_{x_0}:
H^s(U) \to H^{s-m}(U)$, $x_0 \in U$.
Let $L^{m}_{s}(\maG)$
be the { norm closure} of $\Psi^m(\maG)$ in the topology of
continuous operators $H^s(M )\to H^{s-m}(M)$.
The following result can be found in \cite[Theorem 4.17]{CNQ}.

\begin{theorem}\label{thm.nonclassical2}
Let $\maG \tto M$ be a Fredholm Lie groupoid and let
$U \subset M$ be the dense, $\maG$-invariant subset such that
$\maG_{U} \simeq U \times U$.
 Let $s\in \RR$ and $P \in L^m_s(\maG) \supset \Psi\sp{m}(\maG)$.
We have
\begin{multline*} 
	P : H^s(U) \to H^{s-m}(U)\
        \mbox{ is Fredholm} \ \ \Leftrightarrow \ \ P \mbox{ is
          elliptic and }\\
	\ P_{x} : H^s(\maG_x) \to H^{s-m}(\maG_x) \ \mbox{ is
          invertible for all } x \in F := M \smallsetminus U \,.
\end{multline*}
\end{theorem}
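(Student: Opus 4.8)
The plan is to reduce the statement to the order-zero situation on $L^2(U)$ and then invoke the converse direction of Theorem \ref{thm.Fredholm.Cond}, which already packages the Fredholm dichotomy for an arbitrary unital $C^{*}$-algebra containing $\rCs{\maG}$ as an essential ideal. Since $M$ is compact, all the Sobolev spaces in sight are domains of powers of a fixed positive $\Delta \in \Psi^{2}(\maG)$ with $1+\Delta$ invertible, and both the principal symbol and the regular representations $\pi_x$ extend by continuity from $\Psi^m(\maG)$ to the completion $L^m_s(\maG)$; so ``elliptic'' and ``$P_x$'' are meaningful for $P \in L^m_s(\maG)$, not merely for $P \in \Psi^m(\maG)$.

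First I would perform the order reduction. Given $P \in L^m_s(\maG)$, set
\[
  a \ede (1+\Delta)^{(s-m)/2}\, P\, (1+\Delta)^{-s/2}.
\]
Because $(1+\Delta)^{(s-m)/2}\,\Psi^m(\maG)\,(1+\Delta)^{-s/2} \subset \Psi^0(\maG)$ and the norm defining $L^m_s(\maG)$ is exactly the $L^2\to L^2$ norm of this conjugate, we get $a \in L^0_0(\maG) =: \mathbfPsi$, the norm closure of $\Psi^0(\maG)$ in $\maL(L^2(U))$. The operators $(1+\Delta)^t$ are invertible between the relevant Sobolev spaces on $M$ and, under $\pi_x$, on each $\maG_x$; hence $P\colon H^s(U)\to H^{s-m}(U)$ is Fredholm iff $a$ is Fredholm on $L^2(U)$, and $P_x=\pi_x(P)$ is invertible iff $\pi_x(a)=(1+\Delta_x)^{(s-m)/2}P_x(1+\Delta_x)^{-s/2}$ is invertible. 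Likewise $P$ is elliptic iff $a$ is elliptic, since the order-$0$ principal symbol of $a$ is $|\xi|^{-m}\sigma_m(P)$. Everything therefore reduces to the case $m=s=0$, $a\in\mathbfPsi$.

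Next I would set up the $C^{*}$-algebraic input. The algebra $\mathbfPsi=L^0_0(\maG)$ is unital and contains $\rCs{\maG}$ (the $L^2$-closure of $\Psi^{-\infty}(\maG)=\maC_c^\infty(\maG)$, which coincides with the closure of $\Psi^{-1}(\maG)$) as a closed two-sided ideal; this ideal is essential because it already contains $\pi_0(\rCs{\maG_U})=\maK$, the compacts on $L^2(U)$, which are essential in $\maL(L^2(U))$. The order-$0$ principal symbol gives the exact sequence
\[
  0 \longrightarrow \rCs{\maG} \longrightarrow \mathbfPsi \xrightarrow{\ \sigma_0\ } C(S^{*}A(\maG)) \longrightarrow 0,
\]
so the image of $a$ in $\mathbfPsi/\rCs{\maG}\cong C(S^{*}A(\maG))$ is invertible precisely when $a$ is elliptic. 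Applying the converse half of Theorem \ref{thm.Fredholm.Cond} with this $\mathbfPsi$ (legitimate, as $\maG$ is Fredholm), $a$ is Fredholm on $L^2(U)$ iff $\pi_x(a)$ is invertible for every $x\in F$ \emph{and} the image of $a$ in $\mathbfPsi/\rCs{\maG}$ is invertible, i.e. iff $a$ is elliptic and $\pi_x(a)$ is invertible for all $x\in F$. Translating back through the order reduction yields the asserted equivalence.

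The main obstacle is the order-$0$ symbol sequence together with faithfulness of the vector representation on $\mathbfPsi$: one must know that $\ker\sigma_0$ is exactly $\rCs{\maG}$ --- equivalently that $\overline{\Psi^{-1}(\maG)}^{L^2}=\rCs{\maG}$ and that the $\pi_0$-operator norm agrees with the reduced norm on this ideal (this is where the faithfulness of $\pi_0$ for Fredholm groupoids, recorded just before the definition of Fredholm groupoid, is used) --- and that $\sigma_0$ is surjective, so that invertibility of the symbol class is the same as existence of a parametrix modulo $\rCs{\maG}$. Granting this standard calculus input, the substance of the theorem is carried entirely by Theorem \ref{thm.Fredholm.Cond}; the remaining work is the routine but necessary bookkeeping of the order reduction and the identification of the quotient $\mathbfPsi/\rCs{\maG}$ with the symbol algebra.
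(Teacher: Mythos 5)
Your proposal is correct and is essentially the paper's own argument: the paper proves this theorem by exactly the same order reduction, setting $a := (1+\Delta)^{(s-m)/2}\, P\, (1+\Delta)^{-s/2} \in L^0_0(\maG)$ (citing \cite{LMN,LN} for this membership), and then applying Theorem \ref{thm.Fredholm.Cond} with $\mathbfPsi = L^0_0(\maG)$. The additional bookkeeping you supply --- essentiality of $\rCs{\maG}$ via the compact operators, and the identification of ellipticity of $a$ with invertibility of its image in $\mathbfPsi/\rCs{\maG}$ through the order-zero symbol sequence --- is precisely the standard calculus input that the paper delegates to \cite{LMN,LN} and \cite{CNQ}.
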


This theorem is proved by considering $a := (1 + \Delta)\sp{(s-m)/2} P
(1 + \Delta)\sp{-s/2}$, which belongs to $\overline{\Psi}(\maG) \, =:\, L^{0}_{0}(\maG)$,
by the results in \cite{LMN, LN}, and applying Theorem \ref{thm.Fredholm.Cond}.


\vspace{0.3cm}
\section{Layer Potentials Groupoids}\label{s.LP_groupoids}

In this section, we   review the construction of layer potentials groupoids for conical domains in \cite{CQ13}. In order to study layer potentials operators, which are operators on the boundary, we consider a groupoid over the desingularized boundary and apply the results in the previous section.

\subsection{Conical domains and desingularization}

We begin with the definition of domains with conical points \cite{BMNZ, CQ13, MN10}.

\begin{definition}\label{domain}
Let $\Omega \subset \RR^n$, $n\geqslant 2$, be an open connected
bounded domain. We say that $\Omega$ is a \emph{domain with conical
  points} if there exists a finite number of points $\{p_1, p_2,
\cdots, p_l\} \subset \partial \Omega$, such that
\begin{enumerate}
\item[(1)] $\partial \Omega \backslash \{p_1, p_2, \cdots, p_l\}$ is smooth;
\item[(2)] for each point $p_i$, there exist a neighborhood $V_{p_i}$
  of ${p_i}$, a possibly disconnected domain $\omega_{p_i} \subset
  S^{n-1}$, $\omega_{p_i} \neq S^{n-1}$, with smooth boundary, and a
  diffeomorphism $\phi_{p_i}: V_{p_i} \rightarrow B^{n}$ such that
$$\phi_{p_i}(\Omega \cap V_{p_i})=\{rx': 0<r< 1, x'\in \omega_{p_i}\}.$$
(We assume always that $\overline{V_i}\cap \overline{V_j}=\emptyset$, for $i\neq j$, $i,j\in\{1,2,\cdots, l\}$.)
\end{enumerate}
If $\pa\Omega= \pa\overline{\Omega}$, then we say that
$\Omega$ is a \emph{domain with no cracks}. The points $p_i$, $i= 1,
\cdots, l$ are called \emph{conical points} or \emph{vertices}. If
$n=2$, $\Omega$ is said to be a {\em polygonal domain}.
\end{definition}
\vspace{0.2cm}

We shall distinguish two cases: \emph{conical domains without cracks}, $n\in\mathbb{N}$, and
\emph{polygonal domains with ramified cracks}. (Note that if $n\geq 3$ then domains with cracks have edges, and are no longer conical.)

\vspace{0.2cm}

For simplicity, we assume $\Omega$ to be a subset of $\RR^n$. In general, $\overline{\Omega}$ is a compact manifold with corners, with boundary points of maximum depth $2$, and all our constructions apply.

\vspace{0.2cm}

  In applications to boundary value
problems in $\Omega$, it is often useful to regard smooth boundary points as artificial
vertices, representing for instance a change in boundary conditions. Then a conical point $x$ is a smooth boundary point
if, and only if, $\omega_x\cong S_+^{n-1}$.
The minimum set of conical points is unique and
coincides with the singularities of $\pa \Omega$; these are \emph{true
  conical points} of $\Omega$. Here we will give our results for true vertices, but the constructions can easily be extended to artificial ones.

  For the remainder of the paper, we keep the notation as in Definition \ref{domain}. Moreover,
for a conical domain $\Omega$, we always denote by $$\Omega^{(0)}=\{ p_1, p_2, \cdots, p_l\},$$
the set of true conical points of $\Omega$, and by $\Omega_0$ be the smooth part of $\pa\Omega$,
i.e., $\Omega_0=\pa\Omega \backslash \{p_1,p_2,\cdots, p_l\}$.
We remark that we allow the bases $\omega_{p_{i}}$ and $\pa \omega_{p_{i}}$ to be disconnected (in fact, if $n=2$, $\pa \omega_{p_{i}}$ is always disconnected).

\vspace{0.2cm}

We now recall the definition  of the
\emph{desingularization} $\Sigma(\Omega)$ of $\Omega$ of a conical domain, which is obtained from $\Omega$ by removing a,
possibly non-connected, neighborhood of the singular points and
replacing each connected component by a cylinder.
 We refer to  \cite{BMNZ} for details on this construction, see also \cite{CQ13, Kon, MelroseAPS}. \smallskip
We have  the following
\begin{equation*}\label{desing}
   \Sigma(\Omega) \cong \left(\bigsqcup\limits_{p_i \in
     \Omega^{(0)}}[0,1)\times \overline{\omega_{p_i}}
     \right) \;  \bigcup\limits_{\phi_{p_i} }\;
     \Omega,
\end{equation*}
where the two sets are glued by $\phi_i$ along a suitable neighborhood of $p_{i}$.

The boundary $\pa\Sigma(\Omega)$ can be identified with the union of
$\pa'\Sigma(\Omega)$ and $\pa^{''}\Sigma(\Omega)$, where $\pa'\Sigma(\Omega)$  is the union of hyperfaces that are not at infinity,
and $\pa^{''}\Sigma(\Omega)$ is the union of hyperfaces that are at infinity.  In the terminology of \cite{ALN}, the hyperfaces
 {\em not at infinity}
  correspond to actual faces of $\Omega$, while
  hyperfaces {\em at infinity}
correspond to a singularity of $\Omega$.
We can write
\begin{equation}\label{bdry.desing}
  \pa \Sigma(\Omega) \cong \left(\bigsqcup\limits_{p_i \in
    \Omega^{(0)}}[0,1)\times \partial\omega_{p_i} \cup
    \{0\}\times \overline{\omega_{p_i} } \right)
    \bigcup\limits_{\phi_{p_i}, \, p_i\in\Omega^{(0)}} \Omega_0.
\end{equation}
where $\Omega_0$ denotes the smooth part of $\partial\Omega$, that is,
$\Omega_0:=\partial \Omega \backslash \Omega^{(0)}$.
We denote by
\begin{equation}\label{M=bdry.desing}
M:= \pa'\Sigma(\Omega)  \cong \left(\bigsqcup\limits_{p_i \in
    \Omega^{(0)}}[0,1)\times \partial\omega_{p_i}  \right)
    \bigcup\limits_{\phi_{p_i}, \, p_i\in\Omega^{(0)}} \Omega_0.
    \end{equation}
Note that $M$ coincides with
the closure of $\Omega_0$ in $\Sigma(\Omega)$. It is a compact manifold with (smooth) boundary
 $$\pa M= \bigsqcup\limits_{p_i \in
    \Omega^{(0)}}\{0\} \times \partial\omega_{p_i}.$$
In fact, we regard $M:=\partial' \Sigma
(\Omega)$ as a desingularization of the boundary $\pa \Omega$. Operators on $M$ will be related to (weighted) operators on $\pa \Omega$, as we shall see in Section 4.
See  \cite{BMNZ, CQ13} for more details.


\smallskip
\subsection{Groupoid construction for conical domains without cracks}\label{ss.groupoidnocrack}

Let $\Omega$ be a conical domain without cracks, $\Omega^{(0)}=\{ p_1, p_2, \cdots, p_l\}$
be the set of (true) conical points of $\Omega$, and $\Omega_0$ be the smooth part of $\pa\Omega$. We will review the definition of the \emph{layer potentials groupoid} $\maG \tto M$, with $M:=\pa'\Sigma(\Omega) $  a compact set, as in the previous subsection, following \cite{CQ13}.

\smallskip

Let $\maH:=[0,\infty) \rtimes (0,\infty) $ be the transformation groupoid
with the action of $(0,\infty)$ on $[0,\infty)$ by dilation [Example \ref{transformation}].
To each $p_i \in \Omega^{(0)}$,
we first associate a groupoid $\maH \times (\pa\omega_{p_i})^2 \tto [0,\infty) \times \pa\omega_{p_i}  $,  where  $(\pa\omega_{p_i})^2$ is  the pair groupoid of $\pa\omega_{p_i}$ [Example \ref{pair_gpd}].
 We then take its reduction to $[0,1) \times \pa\omega_{p_i}$ to define
 $$\maJ_{i}:= \left( \maH \times (\pa\omega_{p_i})^2\right)_{[0,1) \times \pa\omega_{p_i}}^{[0,1) \times \pa\omega_{p_i}}\tto [0,1) \times \pa\omega_{p_i}. $$
Let $\Omega_0^2$ be the pair groupoid of $\Omega_0$. We now  want to glue $\Omega_0^2$ and $\maJ_i$ $(i=1,2,\cdots, l)$
in a suitable way. In fact, over the interior, we have
$\maJ_i|_{(0,1)\times \pa\omega_{p_i}} \simeq (0,1)^2 \times (\pa\omega_{p_i})^2$,
the Cartesian product of two pair groupoids.
We can take a suitable neighborhood $V_i \subset
\mathbb{R}^n$ of $p_i$ and define a diffeomorphism $V_{i}\cap\Omega_{0} \cong (0,\varepsilon_{i})\times \pa\omega_{i} \cong (0,1)\times \pa\omega_{i}$
which leads to a map $\varphi_{i}: int \left(\maJ_{p_i}\right)
\rightarrow \Omega_0^2$
such  that $\varphi_{i}$ is smooth, a
diffeomorphism into its image, and preserves the groupoid structure of
$\maJ_i$ and $\Omega_0^2$. Let $\varphi=(\varphi_{i})_{p_i\in \Omega^{(0)}}$ on the disjoint union. The following definition then makes sense.

\begin{definition}\label{gpd1}
Let $\Omega$ be a conical domain without cracks. The \emph{layer potentials groupoid} associated to $\Omega$ is the Lie groupoid $\maG \tto M:=\pa'\Sigma(\Omega)$ defined
by
\begin{equation}\label{grpd.nocrack}
  \cG:=\left(\bigsqcup\limits_{p_i \in \Omega^{(0)}}\maJ_{p_i} \right)\quad
  \bigcup\limits_{\varphi} \quad
 \Omega_0^2 \quad  \tto \quad M
\end{equation}
where $\varphi=(\varphi_{p_i})_{p_i\in \Omega^{(0)}}$, with space of units
\begin{eqnarray}\label{units.nocrack}
  M &=& \left(\bigsqcup\limits_{p_i \in
    \Omega^{(0)}}[0,1)\times \partial\omega_{p_i} \right)\quad
    \bigcup\limits_{\varphi} \quad \Omega_0 \quad  \cong \quad\partial' \Sigma (\Omega),
\end{eqnarray}
where $\partial' \Sigma (\Omega)$ denotes the union of hyperfaces
which are not at infinity of a desingularization.
\end{definition}

Clearly, the space $M$ of units is
compact. We have that
$\Omega_0$ coincides with  the interior of $M$, so $\Omega_0$ is an open dense subset of $M$.
The following proposition summarizes the properties of the layer potentials groupoid and its groupoid $C^{*}$-algebra. Note that  $C^*(\cH)=\maC_0([0,\infty))\rtimes \RR^+$,
by   \cite{MRen}.

\begin{proposition}\label{Groupoid}
Let $\cG$ be the layer potentials groupoid \eqref{grpd.nocrack} associated to a domain
with conical points $\Omega\subset \RR^n$. Let
$\Omega^{(0)}=\{p_1,p_2,\cdots, p_l\}$ be the set of conical points
and $\Omega_0=\partial \Omega \backslash \Omega^{(0)}$ be the smooth
part of $\partial\Omega$.  Then, $\cG$ is a Lie groupoid with units
$M= \partial' \Sigma (\Omega)$ such that
\begin{enumerate}

\item   $ \Omega_0$ is an open, dense invariant subset with
$\cG_{\Omega_0} \cong \Omega_0 \times \Omega_0$ and $\Psi^m(\cG_{\Omega_0})\cong \Psi^m(\Omega_0)$.

\item For each conical point $p \in \Omega^{(0)}$, $\{ p \} \times \pa
\omega_p$ and $\pa  M= \bigcup\limits_{p \in \Omega^{(0)}} \{ p \} \times \pa
\omega_p$ are invariant subsets and
\begin{equation*}
\cG_{\pa M}= \bigsqcup\limits_{i=1}^{l} (\pa\omega_i \times \pa\omega_i) \times (\RR^+ \times \{p_i\})
\end{equation*}
  \item If $P\in \Psi^m(\cG_{\pa M})$ then for each $p_i\in \Omega^{(0)}$, $P$ defines a Mellin convolution operator on $\RR^+\times \pa \omega_i$.
\smallskip

\item $\cG$ is (metrically) amenable, i.e., $C^*(\cG)\cong C^*_r(\cG)$.

\item If $n\geq 3$, $C^*(\cG_{\pa M})\cong  \bigoplus\limits_{i=1}^{l} \maC_0(\RR^+) \otimes \maK$. If $n=2$, $C^*(\cG_{\pa M})\cong \bigoplus\limits_{i=1}^{l} M_{k_i}(\maC_0(\mathbb{R}^+)) $,
where $k_i$ is the number of elements of $\pa \omega_i$ and
  $l$ is the number of conical points.
\end{enumerate}

\end{proposition}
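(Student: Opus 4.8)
The plan is to read off each of the five assertions from the gluing construction of Definition~\ref{gpd1}, using the model groupoids of Section~\ref{ss.ops.grpds} together with the exact sequences \eqref{renault.exact_item1}--\eqref{renault.exact_item3}. The Lie groupoid structure of $\cG$ is built into the construction (the $\varphi_i$ are smooth groupoid embeddings gluing the $\maJ_{p_i}$ to $\Omega_0^2$), so I will take it as given and concentrate on the structural statements. The organizing principle is the invariant partition $M = \Omega_0 \sqcup \pa M$ and the corresponding disjoint decomposition $\cG = \Omega_0^2 \sqcup \cG_{\pa M}$.

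For (1), I would note that $\Omega_0 = \mathrm{int}(M)$ is open and dense because $M$ is a compact manifold with boundary, and that it is invariant because over the interior $(0,1)\times\pa\omega_{p_i}$ of each $\maJ_{p_i}$ the dilation action is free and transitive on the radial factor, so no orbit reaches the tip. Since each $\varphi_i$ identifies $\mathrm{int}(\maJ_{p_i}) \cong (0,1)^2\times(\pa\omega_{p_i})^2$ with an open subgroupoid of $\Omega_0^2$ compatibly with the structure maps, the reduction $\cG_{\Omega_0}$ is exactly the pair groupoid $\Omega_0\times\Omega_0$; the identification $\Psi^m(\cG_{\Omega_0})\cong\Psi^m(\Omega_0)$ is then Example~\ref{pair_gpd}.

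For (2) and (3), the fixed point $0$ of the dilation action shows $\{p_i\}\times\pa\omega_{p_i} = \{0\}\times\pa\omega_{p_i}$ is invariant in $\maJ_{p_i}$, and $\pa M$ is their disjoint union. Reducing, the isotropy of $\maH = [0,\infty)\rtimes(0,\infty)$ at $0$ is the full acting group $(0,\infty)\cong\RR^+$ (see Example~\ref{transformation}), so $(\maJ_{p_i})_{\{0\}\times\pa\omega_{p_i}} \cong (\pa\omega_i\times\pa\omega_i)\times\RR^+$, which is the stated formula for $\cG_{\pa M}$. For (3), this reduction is the product of the multiplicative group $\RR^+$ with the pair groupoid $(\pa\omega_i)^2$; right-invariance of $P=(P_x)\in\Psi^m(\cG_{\pa M})$ along the group direction forces its kernel to depend only on the $\RR^+$-quotient, and applying the Mellin transform (the Fourier transform of $\RR^+$) converts this group convolution into a Mellin multiplier, so $P$ acts on $\RR^+\times\pa\omega_i$ as a Mellin convolution operator, the $(\pa\omega_i)^2$ factor supplying the angular integral kernel.

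For (4) and (5), I would pass to $C^*$-algebras. Writing $\cG_{\pa M}$ as the disjoint union of the products $(\pa\omega_i)^2\times\RR^+$ gives $\Cs{\cG_{\pa M}}\cong\bigoplus_i \Cs{(\pa\omega_i)^2}\otimes\Cs{\RR^+}$, where $\Cs{\RR^+}\cong\maC_0(\RR^+)$ by the Mellin transform. When $n\ge 3$, $\pa\omega_i$ is a smooth compact manifold and $\Cs{(\pa\omega_i)^2}\cong\maK$ by Example~\ref{pair_gpd}; when $n=2$, $\pa\omega_i$ is a $k_i$-point set whose pair groupoid has $C^*$-algebra $M_{k_i}(\CC)$, yielding (5). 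Finally, (4) follows from the Five Lemma argument recorded after \eqref{renault.exact_item3}: $\cG_{\Omega_0}\cong\Omega_0^2$ is metrically amenable (Example~\ref{pair_gpd}) and $\cG_{\pa M}$ is metrically amenable since its isotropy group $\RR^+$ is amenable (Example~\ref{ex.Exel}), so both \eqref{renault.exact_item1} and \eqref{renault.exact_item3} hold and $\cG$ is metrically amenable. The one step requiring genuine care, rather than bookkeeping, is the identification underlying (2)--(3): one must verify that gluing along $\varphi_i$ leaves the behaviour at the tips untouched, so that the isotropy there is precisely the dilation group $\RR^+$ and right-invariance translates exactly into the Mellin-convolution normal form; the remaining assertions are then applications of the cited examples and exact sequences.
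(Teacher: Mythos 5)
Your proof is correct, but there is little in this paper to compare it against line by line: Proposition \ref{Groupoid} is stated as a summary of results from the earlier paper \cite{CQ13}, and the only justification offered in the text is the remark following the statement, which records the Mellin kernel $\tilde{\kappa}_i(r,s,x',y')=\kappa_i(r/s,x',y')$ underlying item (3), together with the prior observation that $C^*(\cH)=\maC_0([0,\infty))\rtimes \RR^+$. Your reconstruction is consistent with the paper's framework and in fact anticipates the computations the authors do carry out later, in the proof of Theorem \ref{thm.LPFred}: there they re-derive the invariance of $\Omega_0$ and $\pa M$, the identification $\cG_{\pa M}\cong \bigsqcup_i (\pa\omega_i\times\pa\omega_i)\times\RR^+$, and its description as a fibered pull-back $f\pullback(\maH)$ with $\maH=\maP\times\RR^+$ (Example \ref{ex.help-for-lp}), exactly as you do for items (1)--(2). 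The one point where your route genuinely differs is item (4): you obtain metric amenability of $\cG$ directly from the Five Lemma applied to the exact sequences \eqref{renault.exact_item1} and \eqref{renault.exact_item3}, using that $\Omega_0^2$ is metrically amenable (Example \ref{pair_gpd}) and that $\cG_{\pa M}$ is metrically amenable because its isotropy group $\RR^+$ is amenable; the paper instead gets metric amenability as a byproduct of the strictly spectral/exhaustive machinery, via Proposition \ref{cor.Fredholm.Cond} in the proof of Theorem \ref{thm.LPFred}. Your argument is more elementary and isolates item (4), while the paper's route yields Fredholmness and amenability in a single stroke; both are valid. Your treatment of items (3) and (5) --- right-invariance forcing the kernels to depend only on $r/s$, and $\Cs{\cG_{\pa M}}\cong\bigoplus_i \Cs{(\pa\omega_i)^2}\otimes \Cs{\RR^+}$ with $\Cs{(\pa\omega_i)^2}\cong\maK$ for $n\geq 3$ and $\cong M_{k_i}(\CC)$ for $n=2$ --- matches the intended arguments, and you rightly single out as the key verification that the gluing maps $\varphi_i$ do not alter the groupoid structure over the tips, which is what makes the reduction to $\pa M$ a pure product of pair groupoids with the dilation group.
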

Note that if $P\in \Psi^{m}(\maG)$ then,
at
the boundary, the regular representation yields an operator
$$P_{i}:= \pi_{p_i}(P) \in \Psi^m(\RR^+\times (\pa \omega_{i})^2),$$
which is
defined by a distribution kernel $\kappa_i$ in $\RR^+\times (\pa
\omega_{i})^2$, hence a Mellin
convolution operator on $\RR^+\times \pa \omega_i$ with kernel
$\tilde{\kappa_{i}}(r,s, x',y'):=\kappa_i(r/s, x', y')$.
 If  $P\in \Psi^{-\infty}(\maG)$, that is, if $\kappa_i$ is smooth, then it defines a smoothing
{Mellin convolution operator} on $\RR^+\times \pa \omega_{i}$ (see
\cite{LP,QN12}). This is  one of the motivations in our
definition of $\cG$.

\begin{remark}\label{rmk.bgrpd}
Recall the definition of $b$-groupoid in Example~\ref{bgrpd}, which,
in the case of $M=\bigsqcup_{i} [0,1) \times \pa \omega_{i}$ comes down to
$$
  {^b\cG}=  \bigsqcup_{i,j} \;\RR^+\times (\pa_j \omega_{i})^2 \;\; \bigcup \;\; \Omega_0^2 \;
$$
where $\pa_j \omega_{i}$ denote the connected components of $\pa \omega_{i}$.
If $\pa \omega_{i}$ is connected,  for all $i=1,...,l$, then $\cG={^b\cG}$. In many cases of interest, $\pa \omega$ is not
connected, for instance, if $n=2$, that is,
if we have a polygonal domain, then $\pa \omega$ is  {always}
disconnected.
 In general, the groupoid
$\cG$ is larger and not $d$-connected, and ${^b\cG}$ is an open, wide
subgroupoid of $\cG$. (The main difference is that here we allow the different
connected components of the boundary, corresponding to the same conical point, to interact, in that there are arrows between
them.)
The Lie algebroids of these two groupoids coincide, as $A(\cG)\cong {^bTM}$, the $b$-tangent bundle of $M$.
 Moreover,  $\Psi(\cG)\supset \Psi({^b\cG})$, the (compactly supported) $b$-pseudodifferential operators on $M$.
\end{remark}

\smallskip
\smallskip

\subsection{Groupoid construction for polygonal domains with ramified cracks}\label{ss.groupoidcracks}
Let us first recall the definition of polygonal domains with ramified cracks from \cite{CQ13}. In this subsection, $n=2$.

\begin{definition}\label{def.crack}
Let $\Omega\subset \RR^2$ be a polygonal domain as in Definition \ref{domain}. Then $\Omega$ is a \emph{domain with cracks if $\pa \Omega \neq \pa \overline{\Omega}$.} Let $x \in \pa\Omega$. Suppose that there  exists a neighborhood $V_x$ of $x$ in $\Omega$, an open subset $\omega_x \subset S^1$, $\omega_x \neq S^1$, and a diffeomorphism $\phi_x: V_x \rightarrow B^2$ such that in polar coordinates $(r,\theta)$, we have
$$\phi_x(V_x \cap \Omega)= \{(r,\theta), r\in (0,1), \theta \in \omega_x\}.$$
Then $x$ is a {\em crack point} if $\pa \omega_x \neq \pa \overline{\omega_x}$. Moreover:
\begin{enumerate}
\item  $x$ is an {\em inner crack point} if $\overline{\omega_x}= S^1$ (that is, if $x\in int(\overline{\Omega})$).

\item $x$ is an {\em outer crack point} if $\overline{\omega_x} = S^1_+$ (that is, if $x\in( \pa(\overline{\Omega}))_{0} $, smooth part of the boundary of $\overline{\Omega}$).
\item otherwise, $x$ is a {\em conical crack point}.
\end{enumerate}
If $x$ is in the interior of the set  of all crack points, then  we call $x$ a \emph{smooth crack point}; in this case, $\omega_x\cong S^1_+\sqcup S^1_+$, 
where $S^1_+$ is the hemisphere. Otherwise, we say that $x$ is a \emph{singular crack point.}
\end{definition}

\begin{remark}
According to our terminology, we classify all crack points by two different ways:
\begin{enumerate}
\item[i)] inner crack point, outer crack point, or conical crack point;
\item[ii)] smooth crack point, or singular crack point.
\end{enumerate}
Smooth crack points correspond to the interior of crack curves and are always inner. The endpoints are singular, and may be inner, outer or conical crack points.
Note that the number of singular crack points is finite. We will use the number of connected components of $\omega_x$
to define the {\em ramification number} for each crack point (see below).
\end{remark}

Let $\Omega$ be a polygonal domain with cracks, and $\Omega_0$ be the smooth part of $\pa\Omega$.
Following the work in \cite{CQ13} (see also \cite{LiMN, MN10}), we define the {\em unfolded boundary} $\pa^u \Omega$ to be
the set of inward pointing unit normal vectors to $\Omega_{0}$.
Then the {\em unfolded domain} is defined to be
$$\Omega^u=\Omega \cup \pa^u\Omega.$$
The main idea is that a smooth crack point $x$ should be covered by two points,
which correspond to the two sides of the crack (and the two possible
non-tangential limits at $x$).
Hence $\Omega^u$ is a (generalized) polygonal domain without cracks, i.e., a conical domain without cracks.

Then, let us specify the boundary of $\Omega^u$. It is easy to see that  the smooth part of $\pa\Omega^u$
is just $\pa^u\Omega$,  that is, the union of the smooth part of $\pa\Omega$
and the $2$-covers of the smooth crack curves, and that the non-crack true vertices are also vertices in $\pa \Omega^u$.
As for crack points, first of all, we associate a {\em a ramification number} to each crack point as follows.
\begin{enumerate}
\item If $x$ is an inner or outer crack point, the ramification number $k_{x}$ associated to $x$ is defined
 to be the number of connected components of $\omega_x$.
 \item If $x$ is conical crack point, then we decompose $$\omega_x=\omega_x' \cup \omega_x'',$$ where
 $\overline{\omega_x'}$ and $\overline{\omega_x''}$ are disjoint
  such that $\pa\omega_x'= \pa\overline{\omega_x'}$ and
 $\pa\omega_x'' \neq \pa\overline{\omega_x''}$, and $\omega_x'$ is maximal satisfying these conditions.
 Since $\omega_x''$ is non-empty, denote by $k$ the number of connected components of $\omega_x''$.
 The ramification number associated to $x$ is then defined to be $k${ if $\omega_x'=\emptyset$, and $k+1$ otherwise.}
\end{enumerate}
The ramification number gives us the number of different ways we can approach the boundary close to $x$. Smooth crack points always have ramification number $2$.
For instance, a point
with ramification number $1$ is an end point of some crack curve, hence is a singular crack point.

According to our terminology, the boundary $\pa\Omega^u$ of $\Omega^u$ consists of $\pa^u \Omega$, the non-crack vertices
of $\Omega$, and $k$-covers of singular crack points with ramification number $k$.
\smallskip

Let $\maC=\{c_1, \cdots, c_m\}$ be
the set of singular crack points with $c_1. \cdots, c_{m'}$ the conical crack points with non-empty non-crack part, $\omega_{c_j}'\neq \emptyset$.
Then we define
$$\maC^u:= \{c_{ji} \, |\, c_{ji} \,\,\text{covers} \,\, c_j, i=1,2,\cdots, k_{c_j}\}\; \subset \;  \pa \Omega^{u},$$
where $k_j$ is the number of connected components of $\omega_{c_j}$ with $c_j$ an inner or outer crack point,
or the number of connected components of the crack part $\omega_{c_j}''$ with $c_j$ a conical point.
Moreover, if $j=1,.., m'$ and $\omega_{c_j}' \neq \emptyset$, the cover over $c_j$ is considered together with
 a point $c_{j0}:=c_{j}$, representing the non-crack part.
 Therefore, the set of vertices $V^u$ of $\Omega^u$ can be
decomposed as
$$V^u= \Omega^{(0)} \cup \{ c_{j} \}_{j=1, \cdots m'} \cup \maC^u,$$
where $\Omega^{(0)}=\{p_1,\cdots, p_l\}$ is now the set of non-crack conical points.

For $x\in V^u$, let $\omega_x^u$ the base of the cone at $x$ in $\Omega^u$. If $x\in \Omega^{(0)}$, then $\omega_x^u=\omega_{x}$ and if $x=c_{j}, j=1,..., m'$, then $\omega_x^u=\omega'_{x}$, the non-crack part.  If $c_{j} \in\cC\subset \pa\Omega $ is a singular outer or inner crack point, then the
base of the cone  (in $\Omega$) is a union of open intervals
$$\omega_{c_{j}} = \cup_{k=1}^{k_{c_{j}}} I_{c_{j}k}, \quad  I_{c_{j}k}= ] \theta_{j,k-1}, \theta_{j,k}[,$$
with $k_{c_{j}}$ the ramification number  (if $c_{j}$ is inner then $\theta_{0}=0$, $\theta_{ k_{c_{j}} }=2\pi$,  if $c_{j}$ is outer then $\theta_{0}=0$, $\theta_{ k_{c_{j}} }=\pi$).
In particular, for each $x=c_{jk}\in \cC^u\subset V^u$ in the cover of $c_{j}$, we have
$\omega^u_{c_{jk}}= I_{c_{j}k}= ] \theta_{j,k-1}, \theta_{j,k}[$.
If $c_{j}$ is a conical crack point, then we  replace $\omega_{c_{j}}$ by the crack part $\omega''_{c_{j}}$. 

 So we are able to apply the construction of the previous subsection to $\Omega^u$
to obtain a Lie groupoid.
As before, let $\maH:=\left( [0, \infty) \rtimes (0,\infty) \right)_{[0,1)}^{[0,1)}\tto [0,1)$. Then the groupoid associated to the generalized conical domain $\Omega^u$, with no cracks,  should be defined, according to the construction in the previous subsection:

\begin{eqnarray*}
  \cG^u 
  & := & \left(
  \bigsqcup\limits_{p_i \in \Omega^{(0)}}\maH\times (\partial\omega_{p_i})^2
  \quad \bigcup \quad
     \bigsqcup\limits_{ j=1}^{m' }\maH\times (\partial\omega'_{c_j})^2
   \quad \bigcup
   \bigsqcup\limits_{c_{jk}\in\maC^u}  \maH \times (\partial I_{{c_j}k})^2
  \right) \quad
     \quad \bigcup\limits_{\varphi} \quad (\pa^u \Omega)^2,
\end{eqnarray*}
where $ I_{{c_j}k}$ is the $k$-th connected component of $\omega_{c_j}$, respectively, of $\omega''_{c_j}$, if $c_j$ is non-conical, respectively, $c_j$ is a conical crack point, and $\varphi=(\varphi_{x})_{x\in V^u}$.

Noting that $\pa \omega_x$ is a discrete set, with $\omega_x^{u}$ an interval on covers of crack points, and denoting by $\maP_k$ the pair groupoid of a discrete set with $k$ elements, we get to the following definition  (writing $(\bigsqcup A)^\alpha$ for the disjoint union of $\alpha$ copies of $A$):

\begin{definition}\label{gpd2}
Let $\Omega$ be a polygonal domain with ramified cracks, $\Omega^{(0)}=\{p_1,\cdots, p_l\}$ be the set of non-crack conical points, $\maC=\{c_1, \cdots, c_m\}$ be
the set of singular crack points with $c_1. \cdots, c_{m'}$ the conical crack points with non-empty non-crack part. The Lie groupoid $\maG^u \tto M^u:=\pa'\Sigma(\Omega^u)$
called the {\em layer potential groupoid associated to $\Omega$} is defined by
\begin{equation}
\label{grpd.crack}
 \cG^u = \left(
  \bigsqcup\limits_{p_i \in \Omega^{(0)}}\cH\times \maP_{2k_{p_i}}    \; \bigcup \;
    \bigsqcup\limits_{ j=1, \cdots, m' }\cH\times \maP_{2k_{c_j}'} \; \bigcup \;
  \left(\bigsqcup\limits  \cH \times \maP_{2} \right)^\alpha \right) \;  \bigcup\limits_{\varphi} \; (\pa^u \Omega)^2,
\end{equation}
where $k_{p_i}$, $k_{c_j}'$ are the number of connected components of $\omega_{p_i}$ and $\omega_{c_j}'$, respectively, and $\alpha:= k_{c_1} + \cdots + k_{c_m}{-m'}$ is the total ramification number of $\Omega$, $k_{c_{j}}$ the ramification number of $c_{j}$. The space of units of $\maG^u$ is given by
\begin{eqnarray*}\label{units.crack}
&&\\
  M^u &:=& \left(\bigsqcup\limits_{p_i \in
    \Omega^{(0)}} [0,1)\times \partial\omega_{p_i}
      \;\bigcup \;
    \bigsqcup\limits_{ j=1}^{m' }\, [0,1)\times \partial\omega'_{c_j}
    \; \bigcup \;
    \bigsqcup\limits_{c_{ji}\in\maC^u}  [0,1)\times  \partial I_{{c_j}k}
    \right)\;
    \bigcup\limits_{\varphi}\;  \pa^u \Omega \nonumber.\\
  \end{eqnarray*}

\end{definition}

From the above definition, we have
\begin{eqnarray}\label{boundary2}
\label{ }
\pa M^u = \bigcup\limits_{x \in V^u} \{ x \} \times \pa \omega_x^u =
\bigcup\limits_{p \in \Omega^{(0)}} \{ p \} \times \pa \omega_p \;
\bigcup\limits_{j=1}^{m'} \; \{ c_j \} \times \pa \omega_{c_j}'\;
\bigcup\limits_{c\in \maC^u} \; \{ c \} \times \pa \omega_c^u \;
\end{eqnarray}
where $\omega_x^u$ is the base of the cone at $x$ in $\Omega^u$.
We remark that the unfolded boundary $\pa^u\Omega$ is an open, dense, $\maG^u$-invariant set of $M^u$.

\vspace{0.3cm}
\section{Fredholm Conditions for Operators on Layer Potentials Groupoids}\label{s.FredCond}

In this section, we will adapt layer potential groupoids for conical domains
constructed in Section \ref{s.LP_groupoids} to the
framework of Fredholm groupoids, and then obtain the Fredholm criterion
for operators on layer potential groupoids.

\smallskip
\smallskip

\subsection{Desingularization and weighted Sobolev spaces for conical domains}

An important class of function spaces on singular manifolds are weighted Sobolev spaces.
Let $\Omega$ be a conical domain, and $r_\Omega$ be the smoothened distant function to the set of conical points  $\Omega^{0}$
as in \cite{BMNZ, CQ13}.
The space $L^2(\Sigma(\Omega))$ is defined using the volume element of a compatible metric
on $\Sigma(\Omega)$. A natural choice of compatible metrics is $g= r^{-2}_\Omega \, g_e$, where
$g_e$ is the Euclidean metric. Then the Sobolev spaces $H^m(\Sigma(\Omega))$ are defined  in the usual way, with
pivot $L^2(\Sigma(\Omega))$. These Sobolev spaces can be identified with weighted Sobolev spaces.

\smallskip
Let $m\in \ZZ_{\geqslant 0}$, $\alpha$ be a multi-index. The $m$-th
Sobolev space on $\Omega$ with weight $r_{\Omega}$ and index $a$ is
defined by
\begin{equation}\label{def.weighted}
  \maK_{a}^m(\Omega)=\{u\in L^2_{\text{loc}}(\Omega) \, | \,\,
  r_{\Omega}^{|\alpha|-a}\partial^\alpha u\in L^2(\Omega), \,\,\,\text{for
    all}\,\,\, |\alpha|\leq m\}.
\end{equation}
We defined similarly the spaces $ \maK_{a}^m(\pa\Omega)$. Note that in this case, as $\pa\Omega$ has no boundary, these spaces are defined for any $m\in \mathbb{R}$ {by complex interpolation \cite{BMNZ}}.

The following result is taken from \cite[Proposition~5.7 and
Definition 5.8]{BMNZ}.

\begin{proposition} \label{Identification}
Let $\Omega \subset \RR^n$ be a domain with conical points,
$\Sigma(\Omega)$ be its desingularization, and $\partial'\Sigma(\Omega)$
be the union of the hyperfaces that are not at infinity.
We have
\begin{enumerate}
\item[(a)] $ \maK^{m}_{\frac{n}{2}}(\Omega)\simeq H^{m}(\Sigma(\Omega),
  g),$ for all $m\in \mathbb{Z}$;
\item[(b)] $ \maK^{m}_{\frac{n-1}{2}}(\partial\Omega)\simeq
  H^{m}(\partial'\Sigma(\Omega),g)$, for all $m\in \mathbb{R}$.
\end{enumerate}
where the metric $g= r^{-2}_\Omega \, g_e$ with $g_e$ the Euclidean metric.
\end{proposition}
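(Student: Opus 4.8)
The plan is to establish the two weighted-Sobolev identifications by reducing to a local model near each conical point and then patching with a partition of unity. The statement is essentially a change-of-variables computation that relates the Euclidean weighted derivatives defining $\maK^m_a$ to the intrinsic derivatives on the desingularized space $\Sigma(\Omega)$ equipped with the metric $g = r_\Omega^{-2} g_e$. Since the result is quoted from \cite{BMNZ}, I would treat it as a structural verification rather than a deep theorem, organizing the argument around the behaviour of the metric $g$ near the cylindrical ends.

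First I would fix the local picture. Away from the conical points $\Omega^{(0)}$ the weight $r_\Omega$ is bounded above and below, so $\maK^m_a$ and the ordinary Sobolev space coincide there for every index $a$, and $g$ is uniformly equivalent to $g_e$; thus both identifications (a) and (b) are trivial on the interior and the whole content sits near the vertices. Near a fixed conical point $p_i$, I would use the diffeomorphism $\phi_{p_i}$ of Definition~\ref{domain} to pass to polar-type coordinates $(r,x')$ with $r\in(0,1)$ and $x'\in\omega_{p_i}$ (respectively $x'\in\pa\omega_{p_i}$ for part (b)), so that $r_\Omega$ is comparable to $r$. The key computation is that under the substitution $t = \log r$ (equivalently, in the coordinates used to build the cylinder $[0,1)\times\overline{\omega_{p_i}}$ in the desingularization), the conformal metric $g=r^{-2}g_e$ becomes, up to bounded equivalence, the product (cylindrical) metric $dt^2 + g_{\omega_{p_i}}$. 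Under this change the vector fields $r\partial_r$ and $\partial_{x'}$ become the unit-length generators for $g$, and one checks that the condition $r_\Omega^{|\alpha|-a}\pa^\alpha u\in L^2$ with the Euclidean volume element $r^{n-1}\,dr\,dx'$ matches exactly the condition that the $g$-covariant derivatives of $u$ up to order $m$ lie in $L^2(\Sigma(\Omega),g)$, \emph{provided} the weight index $a$ is chosen to absorb the Jacobian factor from the volume element. This is where the specific values $a=\tfrac{n}{2}$ in (a) and $a=\tfrac{n-1}{2}$ in (b) arise: the shift by $\tfrac{n}{2}$ (resp. $\tfrac{n-1}{2}$) is precisely the power of $r$ needed so that $L^2(\Omega, r^{n-1}dr\,dx')$ is carried to $L^2$ for the cylindrical volume element in dimension $n$ (resp. $n-1$ on the boundary).

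For part (a) the argument runs on $m\in\mathbb{Z}_{\ge 0}$ by the derivative-by-derivative matching just described, and one extends to negative $m$ by duality with pivot $L^2(\Sigma(\Omega),g)$, which is legitimate because the pairing is the same on both sides of the identification. For part (b) I would run the same local computation, but now the base is $\pa\omega_{p_i}$, which has no boundary; consequently the cylindrical ends $[0,1)\times\pa\omega_{p_i}$ are genuine boundaryless cylinders and the scale of spaces is defined for all real $m$ by complex interpolation, as noted in the remark preceding the proposition. I would state the identification for integer $m$ first and then invoke interpolation to cover $m\in\RR$, checking that interpolation commutes with the (bounded) identification isomorphism.

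The main obstacle I expect is the careful bookkeeping of the weight exponents and volume factors under the $r\mapsto \log r$ substitution: one must verify that every order of differentiation contributes the same power of $r$ on both sides, so that the finite-order norms $\sum_{|\alpha|\le m}\|r^{|\alpha|-a}\pa^\alpha u\|_{L^2}$ and $\sum_{|\beta|\le m}\|\nabla_g^\beta u\|_{L^2(g)}$ are uniformly equivalent rather than merely equal up to $r$-dependent constants. Concretely, the delicate point is showing that the Euclidean monomials $\pa^\alpha$ and the $g$-adapted vector fields (generated by $r\partial_r$ and the $\pa_{x'}$) span the same filtered module over $\CI$ with the predicted weights; this is a standard but fiddly induction on $|\alpha|$, and it is the only place where one genuinely uses the conical structure rather than formal symbol manipulation. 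Everything else — the partition of unity gluing, the interior triviality, and the duality/interpolation extensions — is routine once the local weighted equivalence is in hand.
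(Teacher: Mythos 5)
The paper itself offers no proof of Proposition~\ref{Identification}: it is quoted directly from \cite{BMNZ} (Proposition~5.7 and Definition~5.8 there), so there is no internal argument to compare yours against. Your outline is nevertheless the standard argument and is essentially what the cited reference does: the conformal metric $g=r_\Omega^{-2}g_e$ turns a punctured neighborhood of each vertex into a cylindrical end, the substitution $t=\log r$ identifies $r\partial_r$ and the angular derivatives with vector fields of bounded $g$-length, and the volume bookkeeping ($dV_g=r^{-n}\,dx$ on $\Omega$, $r^{-(n-1)}\,dS$ on $\partial\Omega$) is exactly what forces the anchor weights $a=\tfrac{n}{2}$ and $a=\tfrac{n-1}{2}$; away from the vertices all quantities are uniformly equivalent, and a partition of unity glues the local statements. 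Your treatment of the range of $m$ also matches the source: integer $m\ge 0$ by the derivative-by-derivative equivalence, negative integers by duality, and real $m$ for the boundary spaces by interpolation, consistent with the remark preceding the proposition that $\maK^m_a(\pa\Omega)$ is defined for $m\in\RR$ by complex interpolation. The one point worth tightening is the duality step in part (a): on a manifold with boundary the negative-order space depends on a convention (dual of the closure of compactly supported functions versus dual of the full space), which \cite{BMNZ} fixes in their definitions; your sketch glosses over this, but it does not affect part (b), which is the statement the present paper actually uses.
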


\subsection{Freholm criteria for operators on layer potentials groupoids}

To get Fredholm criteria on layer potentials groupoids, we start with checking that  these groupoids are indeed Fredholm.

Let us first see the case of straight cones. Let $\omega \subset S^{n-1}$ be an open subset with smooth
boundary (note that we allow $\omega$ to be \emph{disconnected}) and

$$\Omega := \{t y',\ y' \in \omega,\ t \in (0, \infty)\} =  \mathbb{R}^{+} \,\omega$$
be the (open, unbounded) cone with base $\omega$. The desingularization becomes  in this case  an
half-infinite solid cylinder
\begin{equation*}
\Sigma(\Omega)=  [0,\infty) \times \overline{\omega}
\end{equation*}
with boundary $\pa \Sigma(\Omega) = [0,\infty) \times \partial \omega
\cup \{0\}\times \omega$, so that  $M= \partial'
\Sigma(\Omega)=[0,\infty)\times \partial \omega$ the union of the
hyperfaces not at infinity. Taking the one-point compactification of $[0,\infty]$, we can consider the groupoid $\overline{\maH}$ as in Example \ref{expl.transfgroupoidFredholm}, and
 the {layer potentials groupoid associated to a straight cone
  $\Omega\cong \mathbb{R}^+\omega$} is the product Lie groupoid with units
$M=[0,\infty] \times \pa \omega$, corresponding to a desingularization
  of $\pa \Omega$, defined as
\begin{equation*}\label{cJ}
\cJ := \overline{\maH} \times (\partial\omega)^2.
\end{equation*}
Now, we have seen in Example \ref{expl.transfgroupoidFredholm} that $\overline{\maH}$ is a Fredholm groupoid, hence $\maJ$ is also a Fredholm groupoid.

In the general case, we can proceed in several ways: we can use the same argument as in the straight cone case (that is, as in Example \ref{expl.transfgroupoidFredholm} ),  or we can use the fact that the gluing (along the interior) of Fredholm groupoids is also a Fredholm groupoid. By analogy with the classes of Fredholm groupoids studied in \cite{CNQ17}, we chose to check that $\maG$ is actually given by a fibered pair groupoid over the boundary, that is, a strong submersion groupoid..

\begin{theorem}\label{thm.LPFred}
The layer potentials groupoids defined in Definition \ref{gpd1} and Definition \ref{gpd2} are Fredholm groupoids.
\end{theorem}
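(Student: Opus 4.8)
The plan is to verify the hypotheses of Corollary~\ref{prop.fred} for each of the two groupoids: I need an open, dense, invariant subset $U \subset M$ with $\maG_U \cong U \times U$, and I must recognize the complementary restriction $\maG_F$ as a fibered pull-back $f\pullback(\maH)$ of a bundle $\maH$ of amenable Lie groups along a tame submersion $f$. Since $U$ is open and invariant and $F = M \smallsetminus U$ is closed and invariant, the decomposition $\maG = \maG_U \sqcup \maG_F$ is automatic, so the proof reduces to reading off the correct base, fiber, and structure map. Both ingredients are essentially already packaged in Proposition~\ref{Groupoid} and the disjoint-union description of Example~\ref{ex.help-for-lp}.

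First consider the no-crack case (Definition~\ref{gpd1}). I would take $U := \Omega_0$, the interior of $M = \pa'\Sigma(\Omega)$; by Proposition~\ref{Groupoid}(1) this is open, dense and $\maG$-invariant, with $\maG_{\Omega_0} \cong \Omega_0 \times \Omega_0$ the pair groupoid. The complement $F = \pa M = \bigsqcup_i \{p_i\} \times \pa\omega_i$ is a closed smooth manifold (the boundary of $M$, hence boundaryless itself). Restricting each $\maJ_{p_i}$ to its boundary face $\{0\} \times \pa\omega_i$ and using that the isotropy of $\maH = [0,\infty) \rtimes (0,\infty)$ at $0$ is the dilation group $\RR^+$, Proposition~\ref{Groupoid}(2) gives
\[ \maG_F = \bigsqcup_{i=1}^l (\pa\omega_i \times \pa\omega_i) \times \RR^+. \]
I would then set $B := \{p_1, \dots, p_l\}$ with the discrete topology, $\maH := B \times \RR^+$ the associated bundle of groups whose fiber $\RR^+$ is abelian and hence amenable, and $f : F \to B$ the locally constant map collapsing each $\pa\omega_i$ to $p_i$. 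By Example~\ref{ex.help-for-lp} the fibered pull-back is exactly
\[ f\pullback(\maH) = \bigsqcup_i (\pa\omega_i \times \pa\omega_i) \times \RR^+ = \maG_F, \]
with matching groupoid structure (pair groupoid on each $(\pa\omega_i)^2$ in direct product with the group $\RR^+$). As $F$ is boundaryless and $B$ is discrete, $f$ is trivially a tame submersion, and Corollary~\ref{prop.fred} delivers the Fredholm property.

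For the crack case (Definition~\ref{gpd2}) I would run the same argument with $U := \pa^u\Omega$ and $F := \pa M^u$. Combining the decomposition~\eqref{grpd.crack} with the boundary description~\eqref{boundary2}, the restriction $\maG^u_F$ is again a disjoint union of pieces of the form $(\text{finite pair groupoid}) \times \RR^+$, now indexed over the full vertex set $V^u = \Omega^{(0)} \cup \{c_j\}_{j \le m'} \cup \maC^u$. Taking $B := V^u$ discrete, $\maH := V^u \times \RR^+$, and $f : \pa M^u \to V^u$ the natural projection exhibits $\maG^u_F$ as $f\pullback(\maH)$, a fibered pull-back of the amenable group bundle with fiber $\RR^+$, so Corollary~\ref{prop.fred} applies verbatim.

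The steps above are routine once Proposition~\ref{Groupoid} is in hand, and I do not anticipate a genuine obstacle. The only point deserving care is the bookkeeping in the crack case: the base $V^u$ gathers ordinary conical points, conical-crack points, and the covers $\maC^u$ of singular crack points, and one must confirm that every boundary fiber is a product of a finite pair groupoid with the \emph{single} group $\RR^+$, so that the fibered pull-back over the discrete base $V^u$ is uniform. Since the fiber group $\RR^+$ is identical at every vertex and only the finite unit sets $\pa\omega_x^u$ change, this uniformity is immediate.
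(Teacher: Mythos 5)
Your proposal is correct and follows essentially the same route as the paper: both take $U=\Omega_0$ (resp.\ $\pa^u\Omega$), identify $\maG_F = \bigsqcup_i (\pa\omega_i\times\pa\omega_i)\times\RR^+$, exhibit it as a fibered pull-back $f\pullback(\maH)$ of a bundle of amenable groups $\RR^+$ over a finite discrete base via Example~\ref{ex.help-for-lp}, and conclude with Corollary~\ref{prop.fred}. The only cosmetic difference is that the paper treats the crack case by simply noting it is ``similar, taking the unfolded boundary,'' whereas you spell out the vertex set $V^u$ and the per-vertex pieces explicitly, which matches the paper's intended argument.
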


\begin{proof}
Let us deal with the case of conical domains without cracks. The other case is similar, taking the unfolded boundary.

It is clear that $\Omega_0$ is an open, dense, $\maG$-invariant subset of $M= \pa'\Sigma(\Omega)$, with $\maG_{\Omega_{0}}$ is the pair groupoid. Let
$$F:= M\backslash \Omega_0 =\pa M = \bigcup\limits_{p \in \Omega^{(0)}} \{ p \} \times \pa \omega_{p}\cong  \bigsqcup\limits_{i=1}^{l} \pa \omega_{p_{i}}.$$
We have
\begin{equation*}
\cG_{F}= \bigsqcup\limits_{i=1}^{l} (\pa\omega_i \times \pa\omega_i) \times (\RR^+ )
\end{equation*}
For any $x\in F$, we have $(\maG_F)_x^x =\maG_x^x \simeq \{ x\} \times \mathbb{R}^+ \simeq \mathbb{R}^+ $.
Since the group $\mathbb{R}^+ $ is commutative, it is amenable.
We claim that $\maR(\maG_{F})=\{\pi_x,\, x \in F\}$ is a strictly spectral / exhaustive set of
  representations of $C\sp{\ast}(\maG_F)$.
This can be proved directly, using the description in (4) of Proposition \ref{Groupoid}.

We show alternatively that $\maG_{F}$ can be given as a fibered pair groupoid, along the lines of Example \ref{ex.help-for-lp}.
Let $\maP:= \left\{  \pa\omega_i  \right\}_{i=1,..., l}$ be a finite partition of the smooth manifold $F$ and let $f: F\to \maP$, $x\in  \pa\omega_i  \mapsto  \pa\omega_i  $.
Then $ \pa\omega_i $ are closed submanfolds of $F$ and $\maP$ is a smooth discrete manifold, with $f$ is a locally constant smooth fibration.

Let $\maH:= \maP\times \RR^{+}$, as a product of a manifold and a Lie group. Then, by Example \ref{ex.help-for-lp},
$$f\pullback(\maH) = \bigsqcup\limits_{i=1}^{l} (\pa\omega_i \times \pa\omega_i) \times \RR^+  =  \maG_{F}.$$
Hence, by Corollary \ref{prop.fred}, the result is proved.

\end{proof}

If we apply Theorem \ref{thm.nonclassical2} (\cite[Theorem~4.17]{CNQ}) to our cases, we obtain the main theorems as follows. Recall that the regular representations $\pi_{x}$ and $\pi_{y}$ are unitarily equivalent for $x,y$ in the same orbit of $\maG_{F}$, so that for $P\in \Psi^{m}(\maG)$  we obtain a family of Mellin convolution operators
$P_{i}:=\pi_{x}(P)$ on $\RR^+\times \pa\omega_i$, $i=1,...,p$, with
$x=(p_{i},x')   \in \pa M$, $x'\in \pa\omega_{p_{i}}$.

Recall that the space $L^{m}_{s}(\maG)$
is the {\em norm closure} of $\Psi^m(\maG)$ in the topology of
continuous operators $H^s(M )\to H^{s-m}(M)$. By the results in \cite{QL18, QN12}, if $P\in  L^{m}_{s}(\maG)$, then $\pi_{p_{i}}(P)$ is also a Mellin convolution operator.

\begin{theorem}\label{thm.fredholm}
Suppose that $\Omega \subset \mathbb{R}^n$ is a conical domain without cracks and
$\Omega^{(0)}=\{p_1,p_2,\cdots, p_l\}$ is the set of conical points. Let $\maG\tto M=\pa'\Sigma(\Omega)$
be the layer potentials groupoid as in Definition \ref{gpd1}.
 Let $P\in L^{m}_{s}(\maG)\supset \Psi^m(\maG)$ and $s \in \RR $.
We have
\begin{equation*}
  \begin{gathered}
	P : \maK_{\frac{n-1}{2}}^s(\pa\Omega) \to \maK_{\frac{n-1}{2}}^{s-m}(\pa\Omega) \mbox{ is Fredholm}
        \ \ \Leftrightarrow \ \ P \mbox{ is elliptic and the Mellin convolution operators  }\\
	\ P_{i} : H^s(\RR^+\times \pa\omega_i;g) \to H^{s-m}(\RR^+\times \pa\omega_i;g)\,,
        \, i=1,...,p, \mbox{  are invertible}\,,
  \end{gathered}
\end{equation*}
where the metric $g= r^{-2}_\Omega \, g_e$ with $g_e$ the Euclidean metric.
\end{theorem}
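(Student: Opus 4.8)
The plan is to obtain this as a direct application of the abstract Fredholm criterion of Theorem \ref{thm.nonclassical2} to the layer potentials groupoid $\maG$, once its Fredholmness is in hand. First I would invoke Theorem \ref{thm.LPFred}, which guarantees that $\maG \tto M = \pa'\Sigma(\Omega)$ is a Fredholm groupoid with open, dense, invariant subset $U = \Omega_0$ satisfying $\maG_{\Omega_0} \simeq \Omega_0 \times \Omega_0$, and with $F = M \setminus \Omega_0 = \pa M$. Feeding this data into Theorem \ref{thm.nonclassical2} yields, for $P \in L^m_s(\maG)$, that $P \colon H^s(\Omega_0) \to H^{s-m}(\Omega_0)$ is Fredholm if and only if $P$ is elliptic and $\pi_x(P) \colon H^s(\maG_x) \to H^{s-m}(\maG_x)$ is invertible for every $x \in F$. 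The remaining work is to translate both sides of this equivalence into the language of the statement.

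For the mapping spaces, I would use Proposition \ref{Identification}(b). Since $\Omega_0$ is the interior of the compact manifold with boundary $M = \pa'\Sigma(\Omega)$ and carries full measure, the space $H^s(\Omega_0)$ of Theorem \ref{thm.nonclassical2}, computed with respect to the compatible metric $g = r_\Omega^{-2} g_e$, coincides with $H^s(\pa'\Sigma(\Omega), g)$, which Proposition \ref{Identification}(b) identifies with the weighted space $\maK^s_{\frac{n-1}{2}}(\pa\Omega)$ for all $s \in \RR$; the same identification applies to the codomain with $s$ replaced by $s-m$. This converts the left-hand side of the equivalence precisely into Fredholmness of $P$ between the weighted Sobolev spaces appearing in the theorem.

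For the boundary conditions, I would use the explicit description of $\maG_F$ from Proposition \ref{Groupoid}(2), namely $\maG_F = \bigsqcup_{i=1}^l (\pa\omega_i \times \pa\omega_i) \times \RR^+$. For $x = (p_i, x')$ with $x' \in \pa\omega_i$, the $d$-fiber $\maG_x$ is diffeomorphic to $\RR^+ \times \pa\omega_i$, so $\pi_x(P)$ acts on $H^s(\RR^+ \times \pa\omega_i; g)$; by Proposition \ref{Groupoid}(3) together with the results cited from \cite{QL18, QN12} (which cover $P \in L^m_s(\maG)$, not merely $P \in \Psi^m(\maG)$), this operator is exactly the Mellin convolution operator $P_i$. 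Finally, since the pair groupoid factor $(\pa\omega_i)^2$ acts transitively on $\pa\omega_i$, each $\pa\omega_i$ is a single orbit of $\maG_F$, and regular representations attached to points in one orbit are unitarily equivalent; hence the family $\{\pi_x(P) : x \in F\}$ collapses, up to unitary equivalence, to the finite family $\{P_i\}_{i=1}^l$. Requiring all $\pi_x(P)$ to be invertible is therefore equivalent to requiring all $P_i$ to be invertible, which is the right-hand side of the asserted equivalence.

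I would expect the only genuinely delicate point to be the matching of Sobolev norms: one must verify that the metric $g$ implicitly defining $H^s(\maG_x)$ on the $d$-fibers agrees with the cylindrical metric that makes $\RR^+ \times \pa\omega_i$ the correct domain for the operators $P_i$, and that the global identification $H^s(\Omega_0) \simeq \maK^s_{\frac{n-1}{2}}(\pa\Omega)$ is compatible with the action of $P$ on both sides. Once these identifications are secured, the theorem follows formally, the analytic content having already been absorbed into Theorems \ref{thm.LPFred} and \ref{thm.nonclassical2} and the weighted-space identification of \cite{BMNZ}.
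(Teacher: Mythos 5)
Your proposal is correct and follows essentially the same route as the paper: the paper likewise obtains the theorem by feeding the Fredholm groupoid of Theorem \ref{thm.LPFred} into Theorem \ref{thm.nonclassical2}, reducing the boundary invertibility conditions to one operator per conical point via unitary equivalence of regular representations along the orbits of $\maG_F$, identifying these with Mellin convolution operators (citing \cite{QL18, QN12} to cover $P \in L^m_s(\maG)$ rather than just $\Psi^m(\maG)$), and converting the function spaces through the weighted Sobolev identification of Proposition \ref{Identification}(b). Your treatment of the two translation steps (mapping spaces and boundary operators) matches the paper's argument point for point.
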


As for the case of conical domains with cracks, recall that the boundary $\pa M^u$
of the space of units $M^u$ consists of three different types of points in Equation (\ref{boundary2}).
Likewise, we obtain the following theorem.

\begin{theorem}\label{thm.crack}
Suppose that $\Omega \subset \mathbb{R}^2$ is a conical domain with cracks, $\Omega^{(0)}=\{p_1,\cdots, p_l\}$ is the set of non-crack conical points, $\maC=\{c_1, \cdots, c_m\}$ 
the set of singular crack points with $c_1. \cdots, c_{m'}$ the conical crack points with non-empty non-crack part. Let $\maG^u \tto M ^u=\pa'\Sigma(\Omega^u)$
be the layer potentials groupoid as in Definition \ref{gpd2}. 
Let $P\in L^{m}_{s}(\maG)\supset \Psi^m(\maG^u)$ and $s \in \RR$.
We have	
\begin{equation*}
  \begin{gathered}
	P : \maK_{\frac{n-1}{2}}^s(\pa\Omega) \to \maK_{\frac{n-1}{2}}^{s-m}(\pa\Omega) \mbox{ is Fredholm}
        \ \ \Leftrightarrow \ \ P \mbox{ is elliptic and the Mellin convolution operators   }\\
   \end{gathered}
\end{equation*}
\begin{enumerate}
  \item  $P_i : H^{s}(\RR^+ \times \pa \omega_{p_{i}}, g)\rightarrow H^{s-m}(\RR^+ \times \pa \omega_{p_{i}},g)$, ,$ i=1,...,p$,
  \item $P'_j : H^{s}(\RR^+ \times \pa \omega'_{c_j}, g)\rightarrow H^{s-m}(\RR^+ \times \pa \omega'_{c_j},g)$,  $j=1,  \cdots, m'$;
  \item $P_{jk} : H^{s}(\RR^+ \times \pa I_{c_jk}, g)\rightarrow H^{s-m}(\RR^+ \times \pa I^h_{c_j},g)$
  with 
  $j=1, \cdots, m$, $k=1, \cdots k_{c_j}$ and $ I_{c_jk}$ is the $k$-th connected component of either $\omega_{c_j}$ if $c_j$ is non-conical, or
  of $\omega''_{c_j}$ if $c_j$ is a conical crack point,
\end{enumerate}
are invertible.
\end{theorem}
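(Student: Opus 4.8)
The plan is to deduce the statement from the general Fredholm criterion in Theorem~\ref{thm.nonclassical2}, once we know that $\maG^u$ is a Fredholm groupoid and have translated both the function spaces and the boundary restrictions into the language of the statement. By Theorem~\ref{thm.LPFred}, the layer potentials groupoid $\maG^u \tto M^u = \pa'\Sigma(\Omega^u)$ is Fredholm, with the unfolded boundary $U := \pa^u\Omega$ as the open, dense, $\maG^u$-invariant subset for which $\maG^u_U \cong U \times U$ is the pair groupoid; hence $F := M^u \setminus U = \pa M^u$ is given exactly by Equation~(\ref{boundary2}). First I would apply Theorem~\ref{thm.nonclassical2} directly: for $P \in L^m_s(\maG^u)$, the operator $P : H^s(U,g) \to H^{s-m}(U,g)$ is Fredholm if and only if $P$ is elliptic and each regular representation $\pi_x(P) : H^s(\maG^u_x) \to H^{s-m}(\maG^u_x)$ is invertible for every $x \in F$.

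The second step is to identify the two sides with the objects in the statement. On the domain side I would apply Proposition~\ref{Identification}(b) to the generalized conical domain $\Omega^u$, which has no cracks, to identify $H^s(\pa'\Sigma(\Omega^u),g) = H^s(U,g)$ with the weighted Sobolev space $\maK^s_{\frac{n-1}{2}}(\pa\Omega^u)$; since the unfolding map covers each smooth crack point by the two points corresponding to its two sides and is the identity away from the cracks, I would then identify $\maK^s_{\frac{n-1}{2}}(\pa\Omega^u)$ with the space $\maK^s_{\frac{n-1}{2}}(\pa\Omega)$ appearing in the statement. On the boundary side, for each $x \in \pa M^u$ the fiber is $\maG^u_x \cong \RR^+ \times \pa\omega_x^u$ and, by Proposition~\ref{Groupoid}(3) applied to $\Omega^u$ (together with \cite{QL18, QN12} to cover the larger class $L^m_s(\maG^u)$), the operator $\pi_x(P)$ is a Mellin convolution operator on $\RR^+ \times \pa\omega_x^u$.

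The third step is bookkeeping of orbits. Two regular representations $\pi_x$, $\pi_y$ are unitarily equivalent precisely when $x$ and $y$ lie in the same orbit of $\maG^u_F$, so it suffices to test invertibility on one representative per orbit; the orbit decomposition is dictated by Equation~(\ref{boundary2}). The points $\{p_i\} \times \pa\omega_{p_i}$ over the non-crack conical vertices give the operators $P_i$; the points $\{c_j\} \times \pa\omega'_{c_j}$ over the non-crack parts of conical crack points give the operators $P'_j$; and the points in the cover $\maC^u$ of the singular crack points give the operators $P_{jk}$ on $\RR^+ \times \pa I_{c_jk}$. Matching these three families against items (1)--(3) of the statement then finishes the argument.

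The main obstacle I expect is the identification of the weighted Sobolev space on the \emph{cracked} boundary $\pa\Omega$ with $H^s(U,g)$ on the desingularized unfolded boundary: one must verify that the unfolding splits the function space correctly across the two sides of each crack and intertwines the layer potential operator with its image $P$ on the groupoid, so that ellipticity together with the three families of Mellin convolution operators genuinely controls Fredholmness of the original operator. The remaining ingredients --- that elements of $L^m_s(\maG^u)$ still yield Mellin convolution operators at the boundary, and that the abelian isotropy $\RR^+$ supplies the exhaustivity (Exel) property underlying Theorem~\ref{thm.LPFred} --- are supplied by the cited results and should present no essential difficulty.
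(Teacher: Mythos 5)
Your proposal is correct and follows essentially the same route as the paper: the paper likewise deduces Theorem~\ref{thm.crack} by applying Theorem~\ref{thm.nonclassical2} to the groupoid $\maG^u$, which is Fredholm by Theorem~\ref{thm.LPFred}, identifying the function spaces via Proposition~\ref{Identification}(b) for the unfolded (crack-free) domain $\Omega^u$, using \cite{QL18, QN12} so that elements of $L^m_s(\maG^u)$ restrict to Mellin convolution operators, and reading off the three families of boundary operators from the orbit decomposition of $\pa M^u$ in Equation~(\ref{boundary2}). In fact, your explicit discussion of the passage from $\maK^s_{\frac{n-1}{2}}(\pa\Omega^u)$ to $\maK^s_{\frac{n-1}{2}}(\pa\Omega)$ across the two-sided crack covers is more detailed than the paper's own one-line argument.
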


As an application of the above theorems, we can show that certain boundary integral operators are invertible on appropriate weighted
Sobolev spaces. Let us briefly recall some result in \cite{QL18}. Let $\Omega$ be a simply connected polygon in $\mathbb{R}^2$ with (interior)
angles $\theta_1, \theta_2, \cdots, \theta_n$.
The Neumann-Poincar\'{e} (NP, or double layer potential) operator $K$ is the integral operator
$$Kf(x)=-\dfrac{1}{\pi} \int_{\pa\Omega} \dfrac{(x-y)\cdot\nu(y)}{|x-y|^2} f(y) \, dS(y),$$
where $dS$ is the induced measure on $\pa\Omega$, $\nu(y)$ is the outer normal unit vector to the boundary at $y$.
Define $$\theta_0 := \min\{\frac{\pi}{\theta_1}, \frac{\pi}{2\pi-\theta_1},\frac{\pi}{\theta_2}, \frac{\pi}{2\pi-\theta_2}, \cdots, \frac{\pi}{\theta_n},\frac{\pi}{2\pi-\theta_n}\}.$$
By analysing carefully the behavior of $I + K$ near each vertex, we are able to show the invertibility of $I +K$ at each vertex. Hence Theorem \ref{thm.fredholm} becomes applicable.
We obtain
\begin{theorem}
 For $a\in (-\theta_0, 1/2)$ and $m\geq 0$, the operators
$$\pm I+K: \mathcal{K}^m_{\frac{1}{2}+a}(\partial\Omega)\rightarrow \mathcal{K}^m_{\frac{1}{2}+a}(\partial\Omega) $$
are both Fredholm.
\end{theorem}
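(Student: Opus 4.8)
The plan is to recognise $\pm I+K$ as an order-zero operator in $L^{m}_{s}(\maG)$ for the layer potentials groupoid $\maG\tto M=\pa'\Sigma(\Omega)$ attached to the polygon $\Omega$ (a conical domain without cracks with $n=2$), and then to invoke the Fredholm criterion of Theorem \ref{thm.fredholm}. The key structural input is classical: away from the vertices the Neumann--Poincar\'e kernel is smooth and properly supported, while in a conical neighbourhood of each vertex $p_i$ the kernel $(x-y)\cdot\nu(y)/|x-y|^2$ is homogeneous of degree $-1$, hence \emph{dilation invariant}. Under the identification of such a neighbourhood of $p_i$ in $\pa\Omega$ with $(0,\varepsilon_i)\times\pa\omega_{p_i}$, this is exactly the structure carried by operators on $\maG$ (see Definition \ref{gpd1} and the discussion following Proposition \ref{Groupoid}): $K$ becomes a Mellin convolution operator in the $\RR^{+}$-direction, and the identity contributes the unit of $L^{0}_{s}(\maG)$. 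Since $\maG$ is a Fredholm groupoid by Theorem \ref{thm.LPFred}, Theorem \ref{thm.fredholm} is applicable.

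Next I would remove the weight. Writing $\maK^{m}_{\frac12+a}(\pa\Omega)=r_{\Omega}^{a}\,\maK^{m}_{\frac12}(\pa\Omega)$ and using Proposition \ref{Identification}(b) with $n=2$ to identify $\maK^{m}_{\frac12}(\pa\Omega)\simeq H^{m}(\pa'\Sigma(\Omega),g)$, the operator $\pm I+K$ is Fredholm on $\maK^{m}_{\frac12+a}(\pa\Omega)$ if and only if its conjugate $r_{\Omega}^{-a}(\pm I+K)\,r_{\Omega}^{a}$ is Fredholm on $\maK^{m}_{\frac12}(\pa\Omega)$. Since $r_{\Omega}^{a}$ is, near each vertex, a power of the cylindrical coordinate, conjugation by it preserves $L^{m}_{s}(\maG)$ and simply translates the Mellin symbol of each vertex operator by $a$. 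Ellipticity is then immediate: $K$ is of order $-1$ in the interior, so the principal symbol of $\pm I+K$ is $\pm1$ and the operator is elliptic of order $0$. Theorem \ref{thm.fredholm} (applied with operator order $0$ and Sobolev index $s=m$) thus reduces Fredholmness to the invertibility of the vertex Mellin convolution operators $\pi_{p_i}\!\big(r_{\Omega}^{-a}(\pm I+K)\,r_{\Omega}^{a}\big)$ on $H^{m}(\RR^{+}\times\pa\omega_{p_i};g)$.

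The main obstacle is this last invertibility, which is exactly where the constraint on $a$ enters. One must compute the Mellin symbol of $\pm I+K$ at a vertex of interior angle $\theta_i$ and determine the line in the complex Mellin plane, selected by the weight index $a$, on which the symbol must not vanish. The explicit analysis carried out in \cite{QL18} shows that this symbol is zero-free precisely when $a\in(-\theta_0,1/2)$, the quantities $\pi/\theta_i$ and $\pi/(2\pi-\theta_i)$ that define $\theta_0$ arising as the critical weights associated with the interior angle $\theta_i$ and its exterior complement $2\pi-\theta_i$. Feeding these invertibility statements back into Theorem \ref{thm.fredholm}, one concludes that $\pm I+K$ is Fredholm on $\maK^{m}_{\frac12+a}(\pa\Omega)$ for every $a\in(-\theta_0,1/2)$ and $m\ge0$, as claimed.
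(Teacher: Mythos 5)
Your proposal follows essentially the same route as the paper: both place $\pm I+K$ in the completed calculus $L^{0}_{m}(\maG)$ of the layer potentials groupoid, invoke the Fredholm criterion of Theorem \ref{thm.fredholm} (available because of Theorem \ref{thm.LPFred}), and defer the invertibility of the vertex Mellin convolution operators --- which is exactly where the constraint $a\in(-\theta_0,1/2)$ arises --- to the explicit symbol analysis of \cite{QL18}. Your explicit treatment of the weight shift by conjugation with $r_{\Omega}^{a}$, translating the Mellin symbols, is a detail the paper leaves implicit in its citation of \cite{QL18}, but it is the correct mechanism and does not constitute a different approach.
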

Combing some results from PDE's, we are able to show that the operators $\pm I+K $ are actually isomorphisms, which implies a solvability result in weighted Sobolev spaces for the Dirichlet problem for Laplace's equation on $\Omega$.

\vspace{0.3cm}
\bibliographystyle{plain}

\def\cprime{$'$} \def\cprime{$'$} \def\cprime{$'$}

\end{document}